\numberwithin{equation}{section}
\numberwithin{figure}{section}
\newtheorem {theorem}{Theorem}[section]
\newtheorem {proposition}[theorem]{Proposition}
\newtheorem {lemma}[theorem]{Lemma}
\newtheorem {corollary}[theorem]{Corollary}
\theoremstyle{definition}
\theoremstyle{theorem}
\newtheorem {remark}[theorem]{Remark}
\newcommand{\diam}{\operatorname{diam}}
\newcommand{\dint}{\textup{d}}
\def\EE{\mathbb{E}}
\def\MM{\mathbb{M}}
\def\NN{\mathbb{N}}
\def\OO{\mathbb{O}}
\def\PP{\mathbb{P}}
\def\QQ{\mathbb{Q}}
\def\RR{\mathbb{R}}
\def\SS{\mathbb{S}}
\def\XX{\mathbb{X}}
\def\bN{\mathbf{N}}
\def\cA{\mathcal{A}}
\def\cC{\mathcal{C}}
\def\cH{\mathcal{H}}
\def\cN{\mathcal{N}}
\def\cX{\mathcal{X}}
\begin{document}

\title{\bfseries Concentration inequalities for \\ functionals of Poisson cylinder processes}

\author{Anastas Baci\footnotemark[1],\; Carina Betken\footnotemark[2],\; Anna Gusakova\footnotemark[3]\;\; and Christoph Th\"ale\footnotemark[4]}

\date{}
\renewcommand{\thefootnote}{\fnsymbol{footnote}}
\footnotetext[1]{Ruhr University Bochum, Germany. Email: anastas.baci@rub.de}

\footnotetext[2]{Ruhr University Bochum, Germany. Email: carina.betken@rub.de}

\footnotetext[3]{Ruhr University Bochum, Germany. Email: anna.gusakova@rub.de}

\footnotetext[4]{Ruhr University Bochum, Germany. Email: christoph.thaele@rub.de}

\maketitle

\begin{abstract}
\noindent  Random union sets $Z$ associated with stationary Poisson processes of $k$-cylinders in $\mathbb{R}^d$ are considered. Under general conditions on the typical cylinder base a concentration inequality for the volume of $Z$ restricted to a compact window is derived. Assuming convexity of the typical cylinder base and isotropy of $Z$ a concentration inequality for intrinsic volumes of arbitrary order is established. A number of special cases are discussed, for example the case when the cylinder bases arise from a random rotation of a fixed convex body. Also the situation of expanding windows is studied. Special attention is payed to the case $k=0$, which corresponds to the classical Boolean model.
\bigskip
\\
{\bf Keywords}. {Boolean model, concentration inequality, cylindrical integral geometry, intrinsic volume, Poisson cylinder process, stochastic geometry.}\\
{\bf MSC}. Primary  60D05, 60F10; Secondary 52A22, 60E15.
\end{abstract}


\section{Introduction}

The stationary Boolean model is one of the most versatile models considered in stochastic geometry. Its numerous applications range, for example, from coverage optimization in telecommunication networks to questions related to virtual material design. While mean value formulas for the intrinsic volumes of the Boolean model are rather classical (see, e.g., \cite{SW}), only recently a satisfactory description of second-order properties was derived by Hug, Last and Schulte \cite{HugLastSchulte} together with an accompanying central limit theory. In addition, building on a concentration inequality for Poisson functionals on abstract phase spaces Gieringer and Last \cite{GieringerLast} obtained concentration inequalities for a class of measures associated with a rather general Boolean model in an observation window. On their way they were able to refine earlier estimates of Heinrich \cite{HeinrichBM} for the volume of a stationary Boolean model in $\RR^d$ restricted to a compact observation window, which in turn were obtained by means of sharp bounds on cumulants.

The aim of the present paper is to prove concentration inequalities for the volume as well as for the intrinsic volumes associated with the union set of a stationary Poisson cylinder process in $\RR^d$ restricted to a compact window. For $k\in\{0,1,\ldots,d-1\}$ we understand by a $k$-cylinder the Minkowski sum of a $k$-dimensional linear subspace in $\RR^d$ and a compact set in its orthogonal complement. A Poisson process of $k$-cylinders (or Poisson cylinder process for short) is a Poisson process on the space of $k$-cylinders in $\RR^d$. We refer to Section \ref{sec:PCP} for a formal description of the model. In this paper we consider the union set $Z$ associated with such a Poisson cylinder process, which is observed in a compact window $W\subset\RR^d$. It is assumed throughout that $Z$ is a stationary random closed set. In this case the distribution of $Z$ is determined by an intensity parameter $\gamma\in(0,\infty)$ as well as the distribution $\QQ$ of the pair $(\Xi,\Theta)$, where $\Xi$ describes the base and $\Theta$ the direction of the typical cylinder. It is worth pointing out that the concept of a Poisson cylinder process generalizes that of the Boolean model discussed above, which is included as the special case $k=0$. In this situation $\Xi$ is the typical grain of the Boolean model and the random direction $\Theta$ has no relevance. Poisson cylinder processes were formally introduced by Matheron \cite{Materon}, Miles \cite{Miles} and Weil \cite{Weil}. More recently, central limit theorems for stationary Poisson processes of cylinders were studied by Heinrich and Spiess \cite{HeinrichSpiessCLTVolume,HeinrichSpiessCLTVundS}. Under an exponential moment assumption on the $(d-k)$-volume of the typical cylinder base they obtained in \cite{HeinrichSpiessCLTVolume} a central limit theorem for the volume of $Z$ in a sequence of growing windows, that is, for $Z\cap W_r$, where $W_r=rW$, as $r\to\infty$. More precisely, using sharp bounds on cumulants they were able to deduce a rate of convergence as well as Cram\'er-type large deviations. In a subsequent paper \cite{HeinrichSpiessCLTVundS} they were able to relax the moment assumptions and to add a central limit theorem for the surface content. Characteristic quantities like volume fraction, covariance function and contact distribution functions of anisotropic Poisson cylinder processes were investigated by Spiess and Spodarev \cite{SpiessSpodarev}. In addition, percolation and connectivity properties related to Poisson cylinder processes with spherical bases and $k=1$ were studied by Tykesson and Windisch \cite{TykessonWindisch}, Hilario, Sidoravicius and Teixeira \cite{Hilario} as well as Borman and Tykesson \cite{BormanTykesson}.

\begin{figure}[t]
    \centering
    \includegraphics[width=0.4\columnwidth]{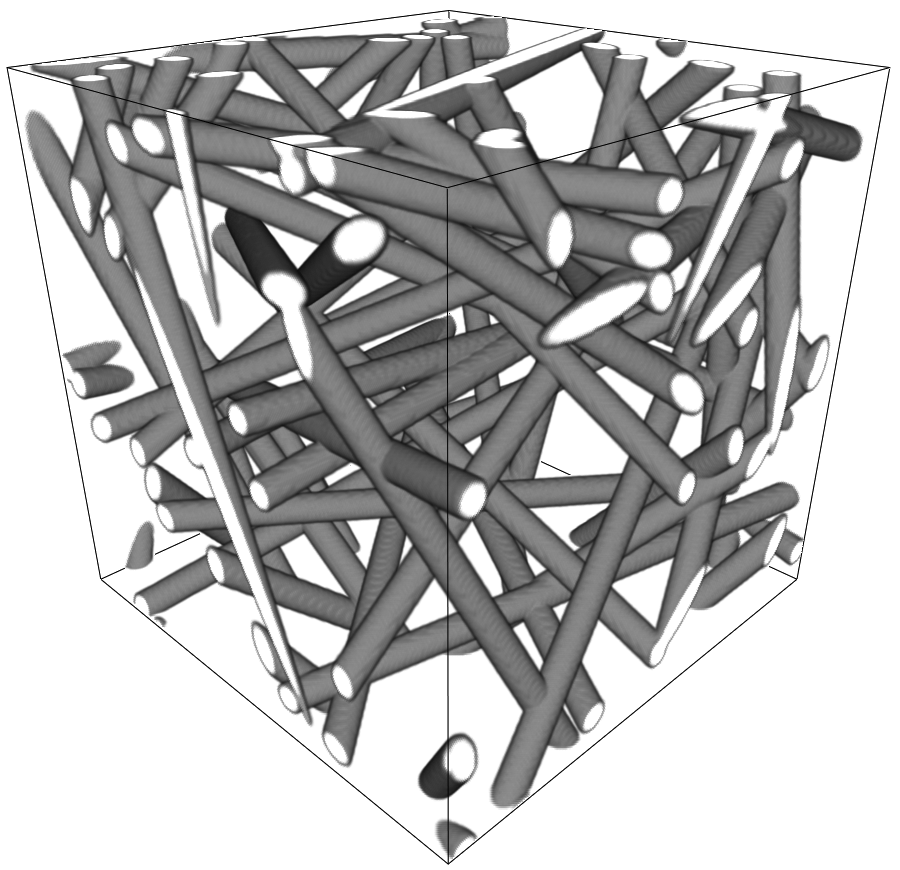}\quad
    \includegraphics[width=0.4\columnwidth]{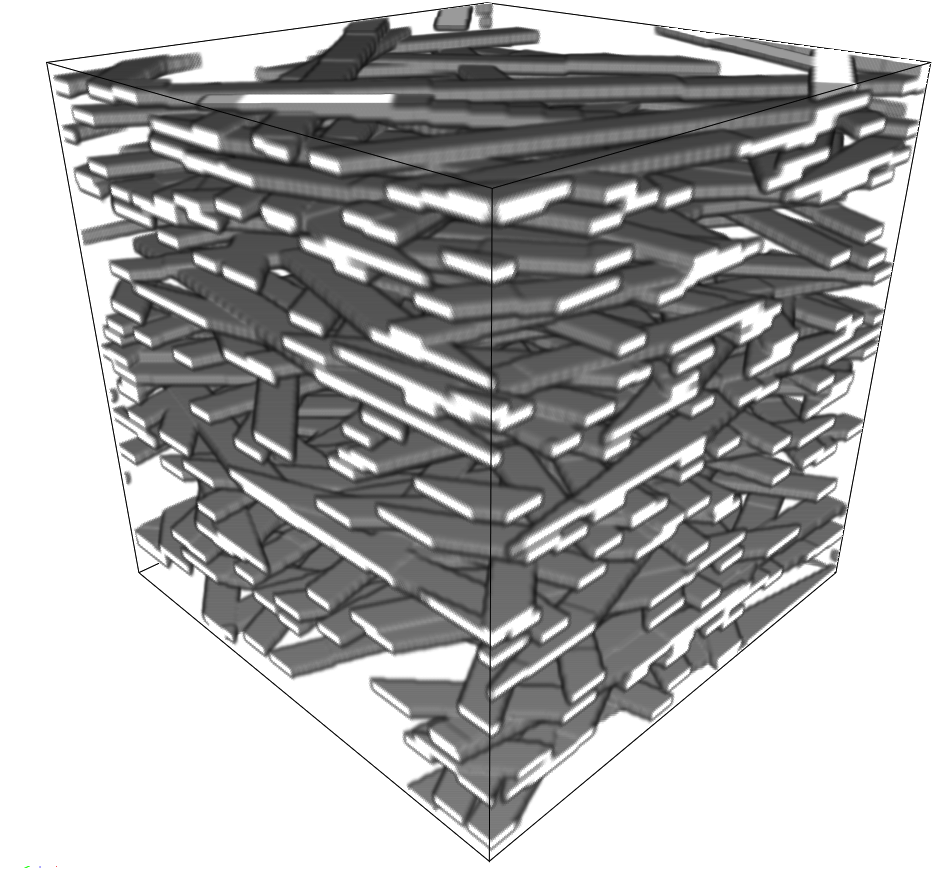}
    \caption{Left panel: Simulation of an isotropic Poisson cylinder process in $\RR^3$ with spherical cylinder base. Right panel: Simulation of an anisotropic Poisson cylinder process in $\RR^3$ with rectangular cylinder base. Both simulations were provided by Claudia Redenbach, Kaiserslautern.}
    \label{fig}
\end{figure}

The aim of the present paper is to derive tail bounds for the volume as well as for the intrinsic volumes of the random union set $Z$ associated with a stationary Poisson cylinder process restricted to a compact observation window. More precisely, under rather general assumptions on the distribution of the typical cylinder base we derive bounds for the upper and lower tail of the volume ($d$-dimensional Lebesgue measure) $F:=\lambda_d(Z\cap W)$ of $Z\cap W$, where $W\subset\RR^d$ is a compact set with positive volume. Our bounds generalize in a natural way the results from \cite{GieringerLast} for the Boolean model. A number of special cases are discussed separately. For example, we consider the case where the cylinder bases are random rotations of a fixed convex body. We will see that in this situation our tail bounds are of the form 
\begin{align}
    \PP(F-\EE F\geq r) &\leq \exp(-\boldsymbol{\Theta}(r\log r)),\qquad r\geq 0,\label{eq:IntroUT}\\
    \PP(F-\EE F\leq -r) &\leq \exp(-\boldsymbol{\Theta}(r^2)),\qquad 0\leq r\leq \EE F,\label{eq:IntroLT}
\end{align}
where $\boldsymbol{\Theta}(r\log r)$ stands for a quantity from $\boldsymbol{O}(r\log r)\cap\boldsymbol{\Omega}(r\log r)$ in the usual Landau notation. As for the Boolean model this constitutes a significant improvement compared to the bounds that can be deduced by means of the general limit theorems for large deviations \cite{Saulis} from the cumulant estimates provided in \cite{HeinrichSpiessCLTVolume}.

Beside the volume of $Z\cap W$ we also study the intrinsic volumes of $Z\cap W$ under the assumption that the cylinder bases are convex and that the union set $Z$ is a stationary and isotropic random closed set. We emphasize that the intrinsic volumes are of particular importance since every continuous, additive and motion-invariant functional on the class of convex bodies can be represented as a linear combination of intrinsic volumes (this is the content of Hadwiger's theorem). We remark that compared to the volume case the intrinsic volumes are more difficult to handle. This partially relies on the fact that even mean value formulas for intrinsic volumes of (stationary and isotropic) Poisson cylinder processes are not available in the existing literature and needed to be developed in the present paper as well. In addition, for the case of intrinsic volumes, isoperimetric inequalities have to be used in order to bring the bounds in a convenient form. As for the volume we consider especially the case where the cylinder bases are random rotations of a fixed convex body and deduce bounds which are comparable to \eqref{eq:IntroUT} and \eqref{eq:IntroLT}. We remark that our results for intrinsic volumes are new even for the special case of the Boolean model for which only the case of the surface content was previously studied in \cite{GieringerDissertation} under quite restrictive assumptions on the typical cylinder base.

\medspace

The remaining parts of this paper are structured as follows. In Section \ref{sec:Notation} we gather some notation and in Section \ref{sec:PCP} we recall the formal definition and description of a Poisson cylinder process and its associated union set. In particular, we derive there a necessary and sufficient criterion under which the union set is isotropic. A concentration inequality for general Poisson functionals from \cite{GieringerLast} is presented in Section \ref{sec:GeneralConcentration}. Tail bounds for the volume are the content of Section \ref{SecVolume} and a number of special cases are discussed in Section \ref{sec:SpecialCasesVolume}. We present concentration properties for the class of intrinsic volumes in the final Section \ref{sec:IntVol}.

\section{Preliminaries}

\subsection{General notation}\label{sec:Notation}

For $d\in\NN$ we let $\lambda_d$ be the Lebesgue measure on $\RR^d$. The $s$-dimensional Hausdorff measure is denoted by $\cH^s$, $s\geq 0$. A centred Euclidean ball in $\RR^d$ with radius $r>0$ is denoted by $B_r^d$. The volume of the $d$-dimensional unit ball is given by $\kappa_d:=\lambda_d(B_1^d)=\frac{\pi^{d/2}}{\Gamma(1+d/2)}$. We let $\cC'(\RR^d)$ be the space of non-empty compact subsets of $\RR^d$ and recall that by a convex body $K\subset\RR^d$ we understand a compact convex set with non-empty interior. For $k\in\{0,1,\ldots,d\}$ and a convex body $K\subset\RR^d$ we let $V_j(K)$ be the $j$th intrinsic volume of $K$. In particular, $V_d(K)=\lambda_d(K)$, $V_{d-1}(K)={1\over 2}\cH^{d-1}(\partial K)$ and $V_1(K)$ is a constant multiple of the mean width of $K$. We use the symbol $\diam(A)$ to indicate the diameter of a set $A\subset\RR^d$. For a (possibly lower-dimensional) convex set $K\subset\RR^d$ we denote by $K^*=-K$ the reflection of $K$ at the origin. Moreover, the linear hull of $A\subset\RR^d$ is denoted by ${\rm lin}(A)$. By $P_{d-k}:\RR^d\to\RR^{d-k}$ we denote the orthogonal projection of $\RR^d$ to $\RR^{d-k}$, i.e., the projection to the first $d-k$ coordinates. By $\OO_d$ and $\SS\OO_d$ we denote the group of orthogonal $d\times d$ matrices and of orthogonal $d\times d$ matrices with determinant $1$, respectively. 

\subsection{Poisson cylinder processes}\label{sec:PCP}

Let $d\geq 2$ and $k\in\{0,1,\ldots,d-1\}$.  Further, we let $G(d,k)$ be the Grassmannian of $k$-dimensional linear subspaces of $\RR^d$. By a $k$-cylinder in $\RR^d$ one understands the Minkowski sum of some $L\in G(d,k)$ with a non-empty compact subset of $L^\perp$, the orthogonal complement of $L$. We identify a subspace $L\in G(d,k)$ with the unique element $\phi_L$ of the equivalence class $\Phi_L$ of orthogonal matrices $\phi\in \SS\OO_d$ satisfying $L=\phi E_k$, where $E_k={\rm lin}(e_{d-k+1},\ldots,e_d)$ and $e_1,\ldots,e_d$ is the standard orthonormal basis in $\RR^d$. In fact, one can choose for $\phi_L$ the lexicografically smallest element of the compact set $\Phi_L$, which yields a one-to-one correspondence between $G(d,k)$ and $\SS\OO_{d,k}:=\{\phi_L={\rm lex\,min}\,\Phi_L:L\in G(d,k)\}$ up to orientation of the subspaces, cf. \cite{HeinrichSpiessCLTVolume,HeinrichSpiessCLTVundS}. In particular, this allows us to regard $\SS\OO_{d,k}$ as a compact homogeneous space for $\SS\OO_d$.

Fix $\gamma\in(0,\infty)$ and let $\eta$ be a stationary Poisson process on ${\rm lin}(e_1,\ldots,e_{d-k})\subset\RR^d$ with intensity $\gamma$. Let $\cC_{d-k}'$ be the space of non-empty compact subsets of ${\rm lin}(e_1,\ldots,e_{d-k})$. Here and in what follows, we identify ${\rm lin}(e_1,\ldots,e_{d-k})$ with $\RR^{d-k}\subset\RR^d$. We define $\MM_{d,k}:=\SS\OO_{d,k}\times\cC_{d-k}'$ and let $\QQ$ be a probability measure on $\MM_{d,k}$. By $\xi$ we denote an independent $\QQ$-marking of $\eta$, which is a Poisson process on the product space $\RR^{d-k}\times\MM_{d,k}$ with intensity measure $\gamma\,\lambda_{d-k}\otimes\QQ$, cf. \cite{LP}. Further we denote by $(\Theta,\Xi)\in\MM_{d,k}$ a random pair with distribution $\QQ$. It represents the (not necessarily independent) distribution of the direction and the base of the typical cylinder in the usual sense of Palm theory. By a stationary Poisson process of $k$-cylinders with intensity $\gamma$ and base-direction distribution $\QQ$ we understand the point process
$$
\widetilde{\xi}:=\sum_{(x,\theta,K)\in\xi}\delta_{Z(x,\theta,K)},\qquad Z(x,\theta,K)=\theta((K+x)\times E_k)
$$
on the space of $k$-cylinders in $\RR^d$, where $\delta_{(\,\cdot\,)}$ denotes the Dirac measure, cf.\ \cite{HeinrichSpiessCLTVolume,HeinrichSpiessCLTVundS}. In this paper we are interested in the random union set
$$
Z := \bigcup_{X\in\widetilde{\xi}}X=\bigcup_{(x,\theta,K)\in\xi}Z(x,\theta,K)
$$
induced by the stationary marked Poisson process $\xi$ or the stationary Poisson cylinder process $\widetilde{\xi}$, respectively, where we write $X\in \widetilde{\xi}$ to indicate that $X$ belongs to the support of $\widetilde{\xi}$. It is known from \cite{HeinrichSpiessCLTVolume,HeinrichSpiessCLTVundS} that $Z$ is a random closed subset of $\RR^d$ in the usual sense of stochastic geometry \cite[Chapter 2]{SW}, provided that 
\begin{align}\label{eq:ConditionRACS}
\EE\lambda_{d-k}(\Xi+B_\varepsilon^{d-k})<\infty\qquad\text{for some}\ \varepsilon>0.
\end{align}
In this case, $F:=\lambda_d(Z\cap W)$ is a well-defined random variable for any compact subset $W\subset\RR^d$. In what follows we shall assume that \eqref{eq:ConditionRACS} is always satisfied.

Another point we shall discuss here is the isotropy property of the random union set $Z$, which means that $\rho Z$ has the same distribution as $Z$ for all $\rho\in \SS\OO_d$. While for the Boolean model an isotropy criterium is well known, surprisingly we were not able to locate a necessary and sufficient condition for isotropy of $Z$ in the existing literature.

\begin{lemma}\label{lem:isotropy}
The random closed set $Z$ is isotropic if and only if $\QQ(\SS\OO_{d,k}\times\,\cdot\,)$ is an $\OO_{d-k}$-invariant probability measure on $\mathcal{C}_{d-k}'$ and $\QQ(\,\cdot\,\times\mathcal{C}_{d-k}')$ is the $\SS\OO_d$-invariant Haar probability measure on $\SS\OO_{d,k}$.
\end{lemma}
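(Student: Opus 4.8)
The plan is to work directly from the definition of $Z$ as a union of cylinders $Z(x,\theta,K)=\theta((K+x)\times E_k)$ over the atoms of the marked Poisson process $\xi$ with intensity measure $\gamma\,\lambda_{d-k}\otimes\QQ$. For a fixed $\rho\in\SS\OO_d$ the image $\rho Z$ is again a union of $k$-cylinders, and because $\rho$ acts on each cylinder by $\rho Z(x,\theta,K)=(\rho\theta)((K+x)\times E_k)$, the set $\rho Z$ is the union set of the Poisson process obtained from $\xi$ by the map $(x,\theta,K)\mapsto(x,\rho\theta,K)$ — except that $\rho\theta$ need not lie in the chosen section $\SS\OO_{d,k}$ of $G(d,k)$, so one first passes to the subspace $\rho\theta E_k\in G(d,k)$ and then to its lexicographically minimal representative, absorbing the residual rotation of $E_k$ (which leaves $E_k$, hence the cylinder, invariant) and the residual rotation of $\RR^{d-k}$ into a change of the base $K$. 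Concretely, there is a measurable map $\theta\mapsto(\beta(\theta)\in\SS\OO_{d,k},\,\omega(\theta)\in\OO_{d-k})$ with $\rho\theta E_k=\beta(\theta)E_k$ and $\rho Z(x,\theta,K)=\beta(\theta)((\omega(\theta)K+\omega(\theta)x)\times E_k)=Z(\omega(\theta)x,\beta(\theta),\omega(\theta)K)$. Since $\eta$ is stationary and isotropic on $\RR^{d-k}$ (a stationary Poisson process is automatically rotation-invariant), the pushforward of $\gamma\lambda_{d-k}$ under $x\mapsto\omega(\theta)x$ is again $\gamma\lambda_{d-k}$, so by the mapping theorem for Poisson processes $\rho Z$ has the same distribution as the union set built from the marked Poisson process with mark distribution $\QQ_\rho$, the pushforward of $\QQ$ under $(\theta,K)\mapsto(\beta(\theta),\omega(\theta)K)$. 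Hence $Z$ is isotropic for all $\rho$ if and only if $\QQ_\rho=\QQ$ for all $\rho\in\SS\OO_d$.

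For the sufficiency direction I would then show that the two stated conditions — that the base marginal $\QQ(\SS\OO_{d,k}\times\,\cdot\,)$ is $\OO_{d-k}$-invariant and that the direction marginal $\QQ(\,\cdot\,\times\cC_{d-k}')$ is the $\SS\OO_d$-invariant Haar probability on $\SS\OO_{d,k}$ — together force $\QQ$ itself to be invariant under the action $(\theta,K)\mapsto(\beta(\theta),\omega(\theta)K)$. The key algebraic point is that the disintegration of $\QQ$ over its direction marginal gives a kernel $(\theta\mapsto\QQ_\theta)$ on $\cC_{d-k}'$, and the action couples the rotation $\theta\mapsto\beta(\theta)$ on the base space $\SS\OO_{d,k}$ with the rotation $\omega(\theta)$ on the fibre; when the direction marginal is Haar, invariance of the joint law reduces exactly to the requirement that the fibre kernels satisfy $\QQ_{\beta(\theta)}=\omega(\theta)_*\QQ_\theta$, and in the presence of $\OO_{d-k}$-invariance of the mean base law (together with the homogeneity of $\SS\OO_{d,k}$ as an $\SS\OO_d$-space, which lets one move between any two directions) this is automatic. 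I would make this precise by testing against product functions $f(\theta)g(K)$ and using Haar invariance to integrate out $\theta$. Care is needed because $\QQ$ need not be a product measure — the base and direction may be dependent — so the argument must be phrased in terms of disintegration rather than assuming independence.

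For the necessity direction, suppose $Z$ is isotropic, i.e. $\QQ_\rho=\QQ$ for all $\rho\in\SS\OO_d$. Projecting onto the direction coordinate gives that the direction marginal is invariant under $\theta\mapsto\beta(\theta)$; since as $\rho$ ranges over $\SS\OO_d$ the induced maps $\beta$ act transitively on $\SS\OO_{d,k}\cong G(d,k)$ and generate its isometry group, the direction marginal must be the unique invariant (Haar) probability measure. Integrating out the direction coordinate and using that for a fixed subspace $L$ the stabiliser of $L$ in $\SS\OO_d$ acts on $L^\perp\cong\RR^{d-k}$ as all of $\OO_{d-k}$, one obtains $\OO_{d-k}$-invariance of the base marginal. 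The main obstacle, and the place where the proof needs genuine care rather than bookkeeping, is the precise handling of the section $\SS\OO_{d,k}$: the map $\rho\theta\mapsto\beta(\theta)$ is not a group homomorphism and the "leftover" rotations $\omega(\theta)\in\OO_{d-k}$ depend measurably on $\theta$ in a way that is only defined up to the stabiliser of $E_k$ in $\SS\OO_d$, so one has to check that every choice of measurable section leads to the same conclusion and that the orientation ambiguity (the correspondence $G(d,k)\leftrightarrow\SS\OO_{d,k}$ being only up to orientation) does not affect the union set, which it does not since $Z(x,\theta,K)$ depends on $\theta$ only through the cylinder it produces. Once the group-theoretic dictionary between the $\SS\OO_d$-action on cylinders and the coupled $(\beta,\omega)$-action on $\MM_{d,k}$ is set up cleanly, both implications follow from uniqueness of Haar measure and transitivity of the relevant actions.
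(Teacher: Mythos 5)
Your opening reduction (the mapping-theorem observation that $\rho Z$ is the union set driven by the pushforward mark law $\QQ_\rho$) is correct and a genuinely different starting point from the paper, which works entirely at the level of the capacity functional $T_Z(C)=1-\exp\big(-\gamma\,\EE\lambda_{d-k}(P_{d-k}(\Theta^TC)+\Xi^*)\big)$. But the step you hinge everything on --- ``$Z$ is isotropic for all $\rho$ if and only if $\QQ_\rho=\QQ$ for all $\rho$'' --- is false in the direction you need. Equality in distribution $\rho Z\stackrel{d}{=}Z$ is, by Choquet's theorem, equivalent to equality of capacity functionals, and $T_Z$ only sees $\QQ$ through the integrals $\EE\lambda_{d-k}(P_{d-k}(\Theta^T C)+\Xi^*)$; it does not determine $\QQ$. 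So ``$Z$ isotropic'' does not force $\QQ_\rho=\QQ$, and your necessity argument starts from a premise you have not earned.

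The sufficiency half also has a real gap. You claim that $\OO_{d-k}$-invariance of the base marginal $\QQ(\SS\OO_{d,k}\times\cdot)$ together with the Haar property of the direction marginal forces the disintegration kernels to satisfy $\QQ_{\beta(\theta)}=\omega(\theta)_*\QQ_\theta$ ``automatically.'' They do not. Joint invariance of $\QQ$ under the coupled action is equivalent to the direction marginal being Haar \emph{and} all fibre kernels $\QQ_\theta$ coinciding with a single $\OO_{d-k}$-invariant measure. The hypotheses of the lemma only control the \emph{averaged} base law $\int\QQ_\theta\,\nu_{d,k}(\dint\theta)$. Concretely, take $\QQ_\theta=\delta_{K_\theta}$ where $K_\theta$ ranges over rotated copies of a fixed asymmetric convex body in $\RR^{d-k}$ as $\theta$ varies over $\SS\OO_{d,k}$: then no individual $\QQ_\theta$ is $\OO_{d-k}$-invariant (so $\QQ$ is not jointly invariant under the coupled action), yet the base marginal is a rotation-average and can easily be arranged to be $\OO_{d-k}$-invariant with direction marginal Haar. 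The paper sidesteps this entirely because the isotropy of $Z$ requires only that a single scalar integral over $(\theta,K)$ be $\rho$-invariant; after the decomposition $\rho^T\theta=\rho_{d,k}^\theta\rho_{d-k}^\theta\rho_k^\theta$ the $\OO_{d-k}$-invariance of the \emph{marginal} and the Haar property of the direction marginal are exactly enough to move $\rho$ through that integral, with no claim about joint invariance of $\QQ$. To salvage your plan you should aim only to show that $\QQ_\rho$ and $\QQ$ produce the same capacity functional (not the stronger $\QQ_\rho=\QQ$), at which point the computation reduces to the one in the paper. You should also be aware that the paper's own proof establishes only the ``if'' direction of the stated equivalence, so the ``only if'' half would need a separate argument in any case.
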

\begin{proof}
We recall that the capacity functional $T_X$ of a random closed set $X$ is given by $T_X(C):=\PP(X\cap C\neq\varnothing)$, $C\in\mathcal{C}'(\RR^d)$. According to \cite[Theorem 2.4.5]{SW} a random closed set $X$ is isotropic if and only if its capacity functional is rotation invariant, that is, if $T_X(C)=T_X(\rho C)$ holds for all $\rho\in \SS\OO_d$ and $C\in\mathcal{C}'(\RR^d)$. The capacity functional $T_Z$ of $Z$ is known and given by 
\[
T_Z(C)= 1-\exp\big(-\gamma\EE\lambda_{d-k}(P_{d-k}(\Theta^TC)+\Xi^*)\big)
\]
according to \cite[Lemma 1]{SpiessSpodarev} or the results in \cite[Section 5]{HeinrichSpiessCLTVolume}.
Now, for $\rho \in \SS\OO_d$ consider 
\[
1- T_Z(\rho C)= \exp\big(-\gamma\EE\lambda_{d-k}(P_{d-k}(\Theta^T(\rho C))+\Xi^*)\big)
\]
and note that
\begin{align}
\nonumber \EE\lambda_{d-k}\left(P_{d-k}(\Theta^T(\rho C))+\Xi^*\right)&=\int\limits_{\MM_{d,k}}\lambda_{d-k}\left(P_{d-k}(\theta^T(\rho C))+K^*\right)\QQ(\dint(\theta,K))\\
&=\int\limits_{\MM_{d,k}}\lambda_{d-k}\left(P_{d-k}((\rho^T\theta)^T(C))+K^*\right)\QQ(\dint(\theta,K)).\label{eq:26-06-19-5}
\end{align}
It was mentioned in \cite{HeinrichSpiessCLTVolume} that the space $\SS\OO_{d,k}$ is the same as the space of representatives of the quotient space $\SS\OO_d/\SS(\OO_{d-k}\times \OO_k)$, where $\SS(\OO_{d-k}\times \OO_k)$ can be identified with the following space of block matrices:
\[
\SS(\OO_{d-k}\times \OO_k)=\left\{
\left(\begin{matrix}
A & 0\\
0 & B
\end{matrix} \right)
\colon A \in \OO_{d-k},\; B\in \OO_k,\; \det A = \det B\right\}.
\]
By construction of $\SS\OO_{d,k}$ as a space of canonical representatives, this means that every element $\rho \in \SS\OO_d$ admits a unique decomposition
\begin{align}\label{eq:26-06-19-3}
\rho^T = \rho_{d,k}\rho_{d-k}\rho_{k}    
\end{align}
where $\rho_{d,k}\in \SS\OO_{d,k}$, $\rho_{d-k}\in \widetilde \OO_{d-k}$ and $\rho_{k}\in \widetilde \OO_{k}$. Here, $\widetilde{\OO}_{d-k}$ and $\widetilde{\OO}_{k}$ are the sets of block matrices given by
\[
\widetilde{\OO}_{d-k}:=\left\{
\left(\begin{matrix}
A & 0\\
0 & I_k
\end{matrix} \right)
\colon A \in \OO_{d-k}\right\},
\qquad
\widetilde{\OO}_{k}:=\left\{
\left(\begin{matrix}
I_{d-k} & 0\\
0 & B
\end{matrix} \right)
\colon B \in \OO_{k}\right\}
\]
with $I_n$ being $n\times n$ identity matrix, $n\in\NN$. For any $\theta\in \SS\OO_{d,k}$ there are uniquely determined elements $\rho_{d,k}^\theta\in \SS\OO_{d,k}$, $\rho_{d-k}^\theta\in\widetilde{\OO}_{d-k}$ and $\rho_k^\theta\in\widetilde{\OO}_k$ such that
\begin{align}\label{eq:26-06-19-4}
\rho^T\theta = \rho_{d,k}^\theta\rho_{d-k}^\theta\rho_k^\theta.
\end{align}
Plugging this into \eqref{eq:26-06-19-5} yields
\begin{align*}
    \EE\lambda_{d-k}\left(P_{d-k}(\Theta^T(\rho C))+\Xi^*\right)&=\int\limits_{\MM_{d,k}}\lambda_{d-k}\left(P_{d-k}((\rho_{d,k}^\theta\rho_{d-k}^\theta\rho_k^\theta)^T(C))+K^*\right)\QQ(\dint(\theta,K))\\
    &=\int\limits_{\MM_{d,k}}\lambda_{d-k}\left(P_{d-k}((\rho_k^\theta)^T(\rho_{d-k}^\theta)^T(\rho_{d,k}^\theta)^T(C))+K^*\right)\QQ(\dint(\theta,K))\\
    &=\int\limits_{\MM_{d,k}}\lambda_{d-k}\left(P_{d-k}((\rho_{d,k}^\theta)^T(C))+(\rho_{d-k}^\theta)(K^*)\right)\QQ(\dint(\theta,K))\\
    &=\int\limits_{\MM_{d,k}}\lambda_{d-k}\left(P_{d-k}((\rho_{d,k}^\theta)^T(C))+K^*\right)\QQ(\dint(\theta,K)).
\end{align*}
Here, to obtain the third equality we used the fact that $(\rho_k^\theta)^T$ does not influence the projection $P_{d-k}(\,\cdot\,)$, that $\rho_{d-k}^\theta$ acts in $\RR^{d-k}$ and thus commutes with the projection $P_{d-k}$, and that the Lebesgue measure $\lambda_{d-k}$ is $\OO_{d-k}$-invariant. Moreover, the last equality follows from the fact that $\QQ(\SS\OO_{d,k}\times\,\cdot\,)$ is $\OO_{d-k}$-invariant by assumption.

To simplify the last expression further, we apply twice the change-of-basis formula from linear algebra. This implies the relations
$$
\rho_{d-k}^\theta = \theta^T\rho_{d-k}\theta\qquad\text{and}\qquad\rho_{k}^\theta = \theta^T\rho_{k}\theta.
$$
Putting this together with \eqref{eq:26-06-19-3} and \eqref{eq:26-06-19-4} we conclude that
\begin{align*}
    \rho_{d,k}\rho_{d-k}\rho_k\theta = \rho^T\theta = \rho_{d,k}^\theta(\theta^T\rho_{d-k}\theta)(\theta^T\rho_k\theta) = \rho_{d,k}^\theta\theta^T\rho_{d-k}\rho_k\theta,
\end{align*}
and hence $\rho_{d,k}^\theta=\rho_{d,k}\theta$. From this we obtain
\begin{align*}
    1-T_Z(\rho C) &= \exp\Big(-\gamma\int\limits_{\MM_{d,k}}\lambda_{d-k}\left(P_{d-k}((\rho_{d,k}\theta)^T(C))+K^*\right)\QQ(\dint(\theta,K))\Big)\\
    &= \exp\Big(-\gamma\int\limits_{\MM_{d,k}}\lambda_{d-k}\left(P_{d-k}(\theta^T(C))+K^*\right)\QQ(\dint(\theta,K))\Big)\\
    &=1-T_Z(C),
\end{align*}
where we have used our assumption that $\QQ(\,\cdot\,\times\mathcal{C}_{d-k}')$ is the $\SS\OO_d$-invariant Haar probability measure on $\SS\OO_{d,k}$. This concludes the proof.
\end{proof}

For a measurable set $M\subseteq\MM_{d,k}$ and $\rho\in \SS\OO_d$ we define
$$
\rho M := \{(\rho^\theta_{d,k},\rho^{\theta}_{d-k}K):(\theta,K)\in M\},
$$
where we applied the decomposition $\rho\theta=\rho^\theta_{d,k}\rho^\theta_{d-k}\rho^\theta_k$ with $\rho^\theta_{d,k}\in \SS\OO_{d,k}$, $\rho^\theta_{d-k}\in\widetilde{\OO}_{d-k}$ and $\rho^\theta_k\in\widetilde{\OO}_k$ using the notation introduced in the previous proof. We say that a probability measure $\QQ$ on $\MM_{d,k}$ is rotation invariant, provided that $\QQ(\rho M)=\QQ(M)$ for all measurable $M\subseteq\MM_{d,k}$. Repeating the same argument as in the proof of Lemma \ref{lem:isotropy} we can conclude that $\QQ$ is rotation invariant if and only if $\QQ(\SS\OO_{d,k}\times\,\cdot\,)$ is an $\OO_{d-k}$-invariant probability measure on $\mathcal{C}_{d-k}'$ and $\QQ(\,\cdot\,\times\mathcal{C}_{d-k}')$
is the $\SS\OO_d$-invariant Haar probability measure on $\SS\OO_{d,k}$. Especially, from Lemma \ref{lem:isotropy} we conclude that the random union set $Z$ is isotropic if and only if $\QQ$ is rotation invariant.

\subsection{Concentration inequalities for general Poisson functionals}\label{sec:GeneralConcentration}

In this section we rephrase a general concentration inequality for Poisson functionals that was recently proved in \cite{GieringerLast} using a covariance identity for functionals of Poisson processes on abstract phase spaces and a classical Chernoff-type argument. For this we let $(\XX,\cX)$ be a measurable space and $\Lambda$ be some $\sigma$-finite measure on $\XX$. By $\eta$ we denote a Poisson process on $\XX$ with intensity measure $\Lambda$, which is defined over some probability space $(\Omega,\cA,\PP)$, cf.\ \cite{LP}. By $\bN=\bN(\cX)$ we denote the space of $\sigma$-finite counting measures on $\XX$, which is supplied with the $\sigma$-field $\cN$ induced by the vague topology on $\bN$. We denote the distribution on $\bN$ of a Poisson process with intensity measure $\Lambda$ by $\Pi_\Lambda$. Finally, by a Poisson functional we understand a random variable $F$ $\PP$-almost surely satisfying $F=f(\eta)$ for some measurable function $f:\bN\to\RR$, called a representative of $F$.

For a measurable function $f:\bN\to\RR$ and a point $x\in\XX$ we define the first-order difference (or add-one-cost) operator by
$$
D_xf(\mu) = f(\mu+\delta_x)-f(\mu),\qquad\mu\in\bN.
$$
In particular, for a Poisson functional $F$ with representative $f$ we write $D_xF$ for $D_xf(\eta)$. For a square-integrable Poisson functional $F\in L^2(\PP)$ we define
\begin{align}\label{eq:DefSF}
s_F := \sup\{s\geq 0:e^{sF}\in L^2(\PP),De^{sF}\in L^2(\PP\otimes\Lambda)\}\in[0,\infty],
\end{align}
and for $s\in[0,s_F)$ we put
\begin{align}\label{eq:DefVF}
V_F(s) := \int\limits_{\XX}(e^{sD_xF}-1)\int\limits_0^1\int\limits_{\bN}D_xf(\eta_t+\mu)\,\Pi_{(1-t)\Lambda}(\dint\mu)\,\dint t\,\Lambda(\dint x),
\end{align}
where $\eta_t$, $t\in[0,1]$ denotes a $t$-thinning of $\eta$ (which is a Poisson process on $\XX$ with intensity measure $t\Lambda$). We are now in the position to rephrase the concentration inequality from \cite[Corollary 2.3]{GieringerLast}.

\begin{lemma}\label{lem:GieringerLast}
Let $F=f(\eta)\in L^2(\PP)$ be a Poisson functional such that $DF\in L^2(\PP\otimes\Lambda)$. Assume that $\PP$-almost surely $V_F(s)\leq v(s)$ for some measurable function $v:[0,s_F)\to\RR$. Then
$$
\PP(F-\EE F\geq r) \leq \exp\Big(\inf_{s\in[0,s_F)}\Big(\int\limits_0^sv(u)\,\dint u-rs\Big)\Big),\qquad\qquad r\geq 0.
$$
\end{lemma}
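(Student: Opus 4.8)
The plan is to obtain the bound by a classical Chernoff-type argument combined with the covariance identity that underlies the formula for $V_F(s)$. First I would fix $s\in[0,s_F)$ and set $g(s):=\EE e^{s(F-\EE F)}$, the (centred) moment generating function. By Markov's inequality applied to $e^{s(F-\EE F)}$ we get, for every $r\ge 0$,
\begin{align*}
\PP(F-\EE F\ge r)\le e^{-rs}\,\EE e^{s(F-\EE F)} = \exp\big(\log g(s)-rs\big),
\end{align*}
so the task reduces to bounding $\log g(s)$ from above by $\int_0^s v(u)\,\dint u$, after which taking the infimum over $s\in[0,s_F)$ yields the assertion.

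Next I would control $\log g(s)$ via a differential inequality. The key step is the identity
\begin{align*}
g'(s)=\EE\big[(F-\EE F)e^{sF}\big]\,e^{-s\EE F}=\Cov\big(F,e^{sF}\big)\,e^{-s\EE F},
\end{align*}
and then the covariance identity for Poisson functionals from \cite{GieringerLast} expresses $\Cov(F,e^{sF})$ as an integral over $\XX$ of $D_xF$ against the ``averaged'' difference operator applied to $e^{sF}$, namely the inner double integral appearing in \eqref{eq:DefVF}. Using $D_x e^{sF}=e^{sF}(e^{sD_xF}-1)$ and pulling out the factor $e^{sF}$, one recognises exactly the integrand of $V_F(s)$ times $e^{sF}$; hence $g'(s)=\EE\big[e^{s(F-\EE F)}V_F(s)\big]$. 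Invoking the hypothesis $V_F(s)\le v(s)$ $\PP$-almost surely (and nonnegativity considerations to handle signs, which hold since the relevant quantity is a covariance of monotone-in-the-same-direction transforms of $F$), this gives the differential inequality $g'(s)\le v(s)\,g(s)$, i.e. $(\log g)'(s)\le v(s)$. Integrating from $0$ to $s$ and using $g(0)=1$ yields $\log g(s)\le\int_0^s v(u)\,\dint u$, which is what was needed.

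The main obstacle is the careful justification of the covariance identity and the interchange of derivative and expectation in $g'(s)=\EE[(F-\EE F)e^{sF}]e^{-s\EE F}$, together with the various Fubini applications inside the definition of $V_F(s)$. These require precisely the integrability conditions built into the definition of $s_F$ in \eqref{eq:DefSF}, namely $e^{sF}\in L^2(\PP)$ and $De^{sF}\in L^2(\PP\otimes\Lambda)$ for $s<s_F$, plus the standing assumption $DF\in L^2(\PP\otimes\Lambda)$; one would verify dominated-convergence bounds on difference quotients of $s\mapsto e^{sF}$ on compact subintervals of $[0,s_F)$. Since all of this is exactly the content of \cite[Corollary 2.3]{GieringerLast}, which we are here rephrasing, in the paper I would simply cite that result; but the sketch above records the mechanism for the reader's convenience.
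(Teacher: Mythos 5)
Your sketch is correct and matches the mechanism of the cited result: the paper states this lemma without an independent proof, deferring entirely to \cite[Corollary 2.3]{GieringerLast}, and that reference proves it precisely by the Chernoff bound $\PP(F-\EE F\ge r)\le e^{-rs}\,\EE e^{s(F-\EE F)}$ combined with the covariance identity for Poisson functionals, yielding $g'(s)=\EE\big[e^{s(F-\EE F)}V_F(s)\big]$ and hence $(\log g)'(s)\le v(s)$ after using $V_F(s)\le v(s)$ together with the nonnegativity of $e^{s(F-\EE F)}$. One minor remark: the phrase about ``a covariance of monotone-in-the-same-direction transforms'' is unnecessary and slightly off-target; the only nonnegativity you need is that of the weight $e^{s(F-\EE F)}$ when passing from $V_F(s)\le v(s)$ to $g'(s)\le v(s)g(s)$, not any sign information about the covariance itself.
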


As already remarked in \cite{GieringerLast} a similar inequality holds for the lower tail of the distribution of $F$ if $s_F$ and $V_F(s)$ from \eqref{eq:DefSF} and \eqref{eq:DefVF} are replaced by $s_F^{({\rm lt})}:=s_{-F}$ and $V_F^{({\rm lt})}(s):=V_{-F}(s)$, respectively. In particular, note that for $s\in[0,s_F^{({\rm lt})})$ the identity
$$
V_F^{({\rm lt})}(s) = \int\limits_{\XX}(1-e^{-sD_xF})\int\limits_0^1\int\limits_{\bN}D_xf(\eta_t+\mu)\,\Pi_{(1-t)\Lambda}(\dint\mu)\,\dint t\,\Lambda(\dint x)
$$
holds.

\begin{lemma}\label{lem:GieringerLastLOWERTAIL}
Let $F=f(\eta)\in L^2(\PP)$ be such that $DF\in L^2(\PP\otimes\Lambda)$. Assume that $\PP$-almost surely $V_F^{({\rm lt})}(s)\leq v(s)$ for some measurable function $v:[0,s_F^{({\rm lt})})\to\RR$. Then
$$
\PP(F-\EE F\leq -r) \leq \exp\Big(\inf_{s\in[0,s_F^{({\rm lt})})}\Big(\int\limits_0^sv(u)\,\dint u-rs\Big)\Big),\qquad\qquad r\geq 0.
$$
\end{lemma}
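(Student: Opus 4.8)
The plan is to obtain Lemma~\ref{lem:GieringerLastLOWERTAIL} directly from Lemma~\ref{lem:GieringerLast} by applying the latter to the Poisson functional $-F$, exactly along the lines indicated in the remark preceding the statement. First I would note that $-F=(-f)(\eta)$, so that $-f$ is a representative of $-F$; moreover $-F\in L^2(\PP)$ is immediate, and since $D_x(-F)=-D_xF$ we get $D(-F)\in L^2(\PP\otimes\Lambda)$ from the hypothesis on $DF$. Hence $-F$ satisfies all assumptions of Lemma~\ref{lem:GieringerLast}.

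Next I would unwind the definitions \eqref{eq:DefSF} and \eqref{eq:DefVF} for $-F$. Because $D_xe^{s(-F)}$ coincides with the difference operator applied to $e^{-sF}$, one reads off that $s_{-F}=s_F^{({\rm lt})}$, which is precisely the quantity in the statement. For the variance proxy, substituting $D_x(-F)=-D_xF$ and $D_x(-f)(\mu)=-D_xf(\mu)$ into \eqref{eq:DefVF} gives
$$
V_{-F}(s)=\int\limits_{\XX}\big(e^{-sD_xF}-1\big)(-1)\int\limits_0^1\int\limits_{\bN}D_xf(\eta_t+\mu)\,\Pi_{(1-t)\Lambda}(\dint\mu)\,\dint t\,\Lambda(\dint x)=V_F^{({\rm lt})}(s),
$$
which is the identity displayed just before the lemma. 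Consequently the hypothesis ``$\PP$-almost surely $V_F^{({\rm lt})}(s)\leq v(s)$'' is the same as ``$\PP$-almost surely $V_{-F}(s)\leq v(s)$'', so Lemma~\ref{lem:GieringerLast} applies to $-F$ with the very same bounding function $v$.

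Finally, applying Lemma~\ref{lem:GieringerLast} to $-F$ yields
$$
\PP\big((-F)-\EE(-F)\geq r\big)\leq\exp\Big(\inf_{s\in[0,s_{-F})}\Big(\int\limits_0^sv(u)\,\dint u-rs\Big)\Big),\qquad r\geq 0,
$$
and it remains to rewrite $(-F)-\EE(-F)=-(F-\EE F)$, so that the event on the left equals $\{F-\EE F\leq -r\}$, and to replace $s_{-F}$ by $s_F^{({\rm lt})}$ in the exponent, which gives the assertion. I do not anticipate any genuine obstacle here: the only points that need care are verifying that square-integrability of $F$ and of $DF$ transfers to $-F$, and keeping track of the three sign changes --- in the event, in the definition of $s_F^{({\rm lt})}$, and in the integrand of $V_F^{({\rm lt})}$ --- after which the statement is a verbatim instance of Lemma~\ref{lem:GieringerLast}.
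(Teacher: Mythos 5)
Your proof is correct and is exactly the reduction the paper has in mind: the remark preceding the lemma defines $s_F^{({\rm lt})}:=s_{-F}$ and $V_F^{({\rm lt})}(s):=V_{-F}(s)$ and simply states that the lower-tail bound follows by applying Lemma~\ref{lem:GieringerLast} to $-F$, which is what you carry out. Your verification of the sign bookkeeping (in particular that $V_{-F}(s)=V_F^{({\rm lt})}(s)$ under the substitution $D_x(-f)=-D_xf$, and that $\{(-F)-\EE(-F)\geq r\}=\{F-\EE F\leq -r\}$) is all that is needed, and it is done correctly.
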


\section{A concentration inequality for the volume}\label{SecVolume}

Our goal in this section is to apply the concentration inequalities for general Poisson functionals from Section \ref{sec:GeneralConcentration} to the volume of the union set of a stationary Poisson cylinder process within a bounded window. More precisely, we let $Z$ be the union set of a stationary Poisson process of $k$-cylinders in $\RR^d$ with intensity $\gamma\in(0,\infty)$ and base-direction distribution $\QQ$. We assume that all random quantities considered are defined over some probability space $(\Omega,\cA,\PP)$. Moreover, we let $W\subset\RR^d$ be a compact set with $\lambda_d(W)>0$. We are interested in the Poisson functional
$$
F := \lambda_d(Z\cap W),
$$
i.e., $F$ is the total volume ($d$-dimensional Lebesgue measure) of all cylinders within $W$. In this section and the next section we assume that the volume of the typical cylinder base has positive and finite first moment, i.e., 
$$
m_{d-k}:=\EE\lambda_{d-k}(\Xi)\in (0,\infty).
$$

In order to check the assumptions of Lemma \ref{lem:GieringerLast}, an analysis of the first-order difference operator of $F$ is necessary. We start by observing that the additivity of the Lebesgue measure implies that, for $(\lambda_{d-k}\otimes\QQ)$-almost all $(x,\theta,K)\in\RR^{d-k}\times\MM_{d,k}$, the following equality
\begin{align*}
D_{(x,\theta,K)}F &= \lambda_d((Z\cup Z(x,\theta,K))\cap W) - \lambda_d(Z\cap W) \\
&=\lambda_d(Z\cap W)+\lambda_d(Z(x,\theta,K)\cap W)-\lambda_d(Z\cap Z(x,\theta,K)\cap W)-\lambda_d(Z\cap W)\\
&= \lambda_d(Z(x,\theta,K)\cap W) - \lambda_d(Z\cap Z(x,\theta,K)\cap W)
\end{align*}
holds $\PP$-almost surely. Using this representation for the difference operator, we can prove the following technical result, where we recall the definition of the quantity $s_F$ from \eqref{eq:DefSF} and also that $s_F^{({\rm lt})}=s_{-F}$.

\begin{lemma}\label{lem:AssumptionsVolume}
Under the assumptions mentioned above we have that $F\in L^2(\PP)$, $DF\in L^2(\PP\otimes\lambda_{d-k}\otimes\QQ)$, $s_F=\infty$ and $s_F^{({\rm lt})}=\infty$.
\end{lemma}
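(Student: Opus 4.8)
The plan is to verify the four claims in turn, all of which ultimately reduce to the elementary bound $0 \le D_{(x,\theta,K)}F \le \lambda_d(Z(x,\theta,K)\cap W)$ that follows immediately from the displayed formula for the difference operator (the subtracted overlap term $\lambda_d(Z\cap Z(x,\theta,K)\cap W)$ is nonnegative and at most $\lambda_d(Z(x,\theta,K)\cap W)$). In particular $|D_{(x,\theta,K)}F|$ is bounded, uniformly in the configuration $\eta$, by $g(x,\theta,K) := \lambda_d(Z(x,\theta,K)\cap W)$, a deterministic function of $(x,\theta,K)$ only. So the whole lemma will come down to showing that $g$ has enough integrability against $\gamma\,\lambda_{d-k}\otimes\QQ$, and then leveraging the (trivial) fact that $0\le F\le \lambda_d(W)<\infty$.

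First I would record that $F$ is bounded, hence $F\in L^2(\PP)$ trivially, and likewise $-F$ is bounded. Next, for $DF\in L^2(\PP\otimes\lambda_{d-k}\otimes\QQ)$ it suffices by the bound above to check $\int g^2\,\dint(\gamma\lambda_{d-k}\otimes\QQ)<\infty$; since $g\le \lambda_d(W)$ it is in fact enough to check $\int g\,\dint(\gamma\lambda_{d-k}\otimes\QQ)<\infty$. This integral is exactly the Mecke-type expression for $\EE F$ (or can be computed directly): writing $Z(x,\theta,K)=\theta((K+x)\times E_k)$ and using the translation/rotation behaviour of $\lambda_d$, the inner $x$-integral of $\lambda_d(\theta((K+x)\times E_k)\cap W)$ over $\RR^{d-k}$ equals, by Fubini and the fact that a $k$-cylinder meets $W$ in a set whose $(d-k)$-dimensional "shadow" is controlled by $K$ together with the compactness of $W$, a finite multiple of $\lambda_{d-k}(K)$ times a constant depending only on $W$; integrating against $\QQ$ then produces a finite multiple of $m_{d-k}=\EE\lambda_{d-k}(\Xi)<\infty$. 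This also re-proves $\EE F<\infty$.

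For $s_F=\infty$ and $s_F^{(\mathrm{lt})}=\infty$ I would argue as follows. Since $0\le F\le\lambda_d(W)$, the random variable $e^{sF}$ is bounded for every $s\ge 0$, so $e^{sF}\in L^2(\PP)$ for all $s\ge 0$; the same applies to $e^{-sF}$. It remains to see that $De^{sF}\in L^2(\PP\otimes\lambda_{d-k}\otimes\QQ)$ for every $s\ge 0$ (and similarly for $e^{-sF}$). Here I would use the mean value theorem in the form $|D_{(x,\theta,K)}e^{sF}| = |e^{s f(\eta+\delta_{(x,\theta,K)})}-e^{sf(\eta)}| \le s\,e^{s\lambda_d(W)}\,|D_{(x,\theta,K)}F| \le s\,e^{s\lambda_d(W)}\,g(x,\theta,K)$, again uniformly in $\eta$. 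Squaring and integrating, the $\eta$-integral is trivial and we are left with $s^2 e^{2s\lambda_d(W)}\int g^2\,\dint(\gamma\lambda_{d-k}\otimes\QQ)<\infty$ by the previous step. The same estimate with $-F$ in place of $F$ gives $s_F^{(\mathrm{lt})}=\infty$. This finishes the proof.

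The only genuine computation is the finiteness of $\int g\,\dint(\gamma\lambda_{d-k}\otimes\QQ)$, i.e. the verification that a single typical cylinder has finite expected volume inside the compact window $W$; this is where the hypothesis $m_{d-k}<\infty$ enters and is the one place where the cylinder geometry (as opposed to soft boundedness of $F$) is actually used. Everything else is a consequence of $F$ taking values in the bounded interval $[0,\lambda_d(W)]$, which makes all the exponential-integrability requirements automatic and forces $s_F=s_F^{(\mathrm{lt})}=\infty$.
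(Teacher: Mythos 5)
Your proof is correct, and it follows essentially the same skeleton as the paper's: boundedness of $F$ by $\lambda_d(W)$, the deterministic majorant $g(x,\theta,K)=\lambda_d(Z(x,\theta,K)\cap W)$ for $|D_{(x,\theta,K)}F|$, and reduction of all four claims to finiteness of one integral of $g$ (or $g^2$). Two technical variations are worth noting. First, for $De^{sF}\in L^2$ the paper writes $D_{(x,\theta,K)}e^{sF}=e^{sF}\bigl(e^{sD_{(x,\theta,K)}F}-1\bigr)$ and then bounds each factor, whereas you invoke the mean value theorem to get $|D_{(x,\theta,K)}e^{sF}|\le s e^{s\lambda_d(W)}g(x,\theta,K)$ directly; both are sound, and yours is arguably cleaner because it keeps everything expressed through $g$. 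Second, the paper bounds $g^2\le\lambda_d(W)^2\,\mathbf{1}\{Z(x,\theta,K)\cap W\neq\varnothing\}$ and appeals to finiteness of $(\lambda_{d-k}\otimes\QQ)\{(x,\theta,K):Z(x,\theta,K)\cap W\neq\varnothing\}$, while you bound $g^2\le\lambda_d(W)\,g$ and compute $\int g\,\dint(\gamma\lambda_{d-k}\otimes\QQ)=\gamma\lambda_d(W)m_{d-k}$; your route is in fact slightly more economical, since it relies only on the explicitly assumed moment $m_{d-k}<\infty$.

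One small inaccuracy in the write-up: you say the integral $\int g\,\dint(\gamma\lambda_{d-k}\otimes\QQ)$ ``is exactly the Mecke-type expression for $\EE F$.'' It isn't --- it is the expected aggregate volume with multiplicities and hence overcounts overlaps; the actual value is $\gamma\lambda_d(W)m_{d-k}$, whereas $\EE F=\lambda_d(W)\bigl(1-e^{-\gamma m_{d-k}}\bigr)$. Since you also carry out the direct computation correctly, this phrase is cosmetic and does not affect the argument, but it should be rephrased so as not to suggest an identity that does not hold.
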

\begin{proof}
Corollary 18.8 in \cite{LP} shows that
\begin{align*}
\EE(F^2) &\leq (\EE(F))^2 +  \gamma\int\limits_{\RR^{d-k}}\int\limits_{\MM_{d,k}}\EE[(D_{(x,\theta,K)}F)^2]\,\QQ(\dint(\theta,K))\,\lambda_{d-k}(\dint x)\\
&\leq \lambda_{d}(W)^2 + \gamma\int\limits_{\RR^{d-k}}\int\limits_{\MM_{d,k}}\lambda_d(Z(x,\theta,K)\cap W)^2\,\QQ(\dint(\theta,K))\,\lambda_{d-k}(\dint x)\\
&\leq \lambda_{d}(W)^2+\gamma\,\lambda_d(W)^2\,(\lambda_{d-k}\otimes\QQ)\left(\{(x,\theta,K)\in\RR^{d-k}\times\MM_{d,k}:Z(x,\theta,K)\cap W\neq\varnothing\}\right)<\infty,
\end{align*}
since $W$ is compact and $\lambda_{d-k}\otimes\QQ$ is a locally finite measure on $\RR^{d-k}\times\MM_{d,k}$. This shows that $F\in L^2(\PP)$ and at the same time $DF\in L^2(\PP\otimes\lambda_{d-k}\otimes\QQ)$. 

Next, we let $s\geq 0$ and observe that $\PP$-almost surely and for $(\lambda_{d-k}\otimes\QQ)$-almost all $(x,\theta,K)\in\RR^{d-k}\times\MM_{d,k}$,
\begin{align*}
D_{(x,\theta,K)}e^{sF} &= e^{s\lambda_d((Z\cup Z(x,\theta,K))\cap W)}-e^{s\lambda_d(Z\cap W)}=e^{sF}\left(e^{sD_{(x,\theta,K)}F}-1\right).
\end{align*}
Thus,
\begin{align*}
&\EE\int\limits_{\RR^{d-k}}\int\limits_{\MM_{d,k}}\big(D_{(x,\theta,K)}e^{sF}\big)^2\,\QQ(\dint(\theta,K))\,\lambda_{d-k}(\dint x)\\
&=\int\limits_{\RR^{d-k}}\int\limits_{\MM_{d,k}}\EE\big[\big(e^{sF}(e^{sD_{(x,\theta,K)}F}-1)\big)^2\big]\,\QQ(\dint(\theta,K))\,\lambda_{d-k}(\dint x)\\
&\leq \EE[e^{2sF}]\int\limits_{\RR^{d-k}}\int\limits_{\MM_{d,k}}\big(e^{s\lambda_d(Z(x,\theta,K)\cap W)}-1\big)^2\,\QQ(\dint(\theta,K))\,\lambda_{d-k}(\dint x)\\
&\leq e^{2s\lambda_d(W)}\big(e^{s\lambda_d(W)}-1\big)^2\,(\lambda_{d-k}\otimes\QQ)\left(\{(x,\theta,K)\in\RR^{d-k}\times\MM_{d,k}:Z(x,\theta,K)\cap W\neq\varnothing\}\right).
\end{align*}
Since the last expression is finite for all $s\geq 0$, we conclude that $De^{sF}\in L^2(\PP\otimes\lambda_{d-k}\otimes\QQ)$. Moreover, using that
\begin{align*}
&\EE\int\limits_{\RR^{d-k}}\int\limits_{\MM_{d,k}}e^{2sF}\,\QQ(\dint(\theta,K))\,\lambda_{d-k}(\dint x)\\
&\qquad\leq e^{2s\lambda_d(W)}\,(\lambda_{d-k}\otimes\QQ)\left(\{(x,\theta,K)\in\RR^{d-k}\times\MM_{d,k}:Z(x,\theta,K)\cap W\neq\varnothing\}\right),
\end{align*}
we obtain the assertion that $s_F=\infty$. To prove that $s_F^{(\rm lt)}=\infty$, we first observe that, since $\PP$-almost surely $e^{-sF}\leq 1$, necessarily $e^{-sF}\in L^2(\PP)$ for any $s\geq 0$. In addition, similarly to the above argument, we have that
\begin{align*}
&\EE\int\limits_{\RR^{d-k}}\int\limits_{\MM_{d,k}}\big(D_{(x,\theta,K)}e^{-sF}\big)^2\,\QQ(\dint(\theta,K))\,\lambda_{d-k}(\dint x) \\
&\qquad\leq (\lambda_{d-k}\otimes\QQ)\left(\{(x,\theta,K)\in\RR^{d-k}\times\MM_{d,k}:Z(x,\theta,K)\cap W\neq\varnothing\}\right)<\infty
\end{align*}
for any $s\geq 0$. This shows that $s_F^{(\rm lt)}=\infty$.
\end{proof}

Recall that 
\begin{equation}\label{eq:ExpectationF}
\EE F=\lambda_d(W)(1-e^{-\gamma\,m_{d-k}})=:\lambda_d(W)\,p,    
\end{equation}
where $p=1-e^{-\gamma\,m_{d-k}}$ is the volume fraction of the random union set $Z$, see \cite{HeinrichSpiessCLTVolume,SpiessSpodarev}.
The main result of this section is the following bound for the upper and the lower tail of the volume of the union set of a stationary Poisson cylinder process in a window $W$. 

\begin{theorem}\label{thm:GeneralConcentrationInequ}
For all $r\geq 0$, one has that
\begin{align*}
\PP(F-\EE F\geq r) &\leq \exp\Big(\inf_{s\geq 0}\Big({p\over m_{d-k}}\EE\Big[\lambda_{d-k}(P_{d-k}(\Theta^TW)+\Xi^*)\Psi(s\lambda_{d-k}(\Xi)\diam(W)^k)\Big]-rs\Big)\Big),
\end{align*}
and for $0\leq r\leq\EE F$ one has that
\begin{align*}
\PP(F-\EE F\leq -r) &\leq \exp\Big(\inf_{s\geq 0}\Big({p\over m_{d-k}}\EE\Big[\lambda_{d-k}(P_{d-k}(\Theta^TW)+\Xi^*)\Psi(-s\lambda_{d-k}(\Xi)\diam(W)^k)\Big]-rs\Big)\Big),
\end{align*}
where $\Psi(x)=e^x-x-1$, $x\in\RR$.
\end{theorem}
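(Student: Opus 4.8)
The plan is to verify the hypotheses of Lemma~\ref{lem:GieringerLast} and Lemma~\ref{lem:GieringerLastLOWERTAIL} with $\XX=\RR^{d-k}\times\MM_{d,k}$ and $\Lambda=\gamma\,\lambda_{d-k}\otimes\QQ$, and then to compute the relevant primitives. By Lemma~\ref{lem:AssumptionsVolume} we already know that $F\in L^2(\PP)$, that $DF\in L^2(\PP\otimes\lambda_{d-k}\otimes\QQ)$ and that $s_F=s_F^{({\rm lt})}=\infty$, so both lemmas are applicable. It therefore remains only to exhibit deterministic functions $v$ and $v^{({\rm lt})}$ on $[0,\infty)$ with $V_F\le v$ and $V_F^{({\rm lt})}\le v^{({\rm lt})}$ $\PP$-almost surely, and to evaluate $\int_0^s v(u)\,\dint u$ and $\int_0^s v^{({\rm lt})}(u)\,\dint u$.

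The first ingredient is geometric: for every $(x,\theta,K)\in\RR^{d-k}\times\MM_{d,k}$ one has $\lambda_d(Z(x,\theta,K)\cap W)\le\lambda_{d-k}(K)\,\diam(W)^k$, and this quantity is zero unless $x\in P_{d-k}(\theta^TW)+K^*$. I would prove this by applying $\theta^T$ (which preserves $\lambda_d$ and $\diam$), so that $Z(x,\theta,K)\cap W$ becomes $((K+x)\times E_k)\cap\theta^TW$, and slicing along the $k$-dimensional fibres over the base $K+x$: each fibre $\{v\in\RR^k:(u,v)\in\theta^TW\}$ has diameter at most $\diam(\theta^TW)=\diam(W)$, hence lies in a $k$-cube of side $\diam(W)$ and has $k$-measure at most $\diam(W)^k$; integrating over $u\in K+x$ gives the bound, while $Z(x,\theta,K)\cap W\neq\varnothing$ happens precisely when $K+x$ meets $P_{d-k}(\theta^TW)$, i.e.\ when $x\in P_{d-k}(\theta^TW)-K=P_{d-k}(\theta^TW)+K^*$. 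Combined with the representation of $D_{(x,\theta,K)}F$ displayed just before Lemma~\ref{lem:AssumptionsVolume} and the obvious inequality $0\le\lambda_d(Z\cap Z(x,\theta,K)\cap W)\le\lambda_d(Z(x,\theta,K)\cap W)$, this yields $\PP$-almost surely that $0\le D_{(x,\theta,K)}F\le\lambda_{d-k}(K)\,\diam(W)^k$, with $D_{(x,\theta,K)}F=0$ whenever $x\notin P_{d-k}(\theta^TW)+K^*$.

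The second ingredient evaluates the inner integral in \eqref{eq:DefVF}. By the same computation as the one preceding Lemma~\ref{lem:AssumptionsVolume} one has $D_{(x,\theta,K)}f(\mu)=\lambda_d(Z(x,\theta,K)\cap W\setminus Z(\mu))$ for every counting measure $\mu$, where $Z(\mu)$ denotes the union set of $\mu$; since $Z(\eta_t+\mu)=Z(\eta_t)\cup Z(\mu)$, Fubini's theorem and the Poisson void probability $\Pi_{(1-t)\Lambda}(\{\mu:w\notin Z(\mu)\})=\exp(-(1-t)\gamma\,m_{d-k})$ — valid because the set $\{(y,\vartheta,L):w\in Z(y,\vartheta,L)\}=\{(y,\vartheta,L):y\in P_{d-k}(\vartheta^Tw)+L^*\}$ has $(\lambda_{d-k}\otimes\QQ)$-measure $\EE\lambda_{d-k}(\Xi^*)=m_{d-k}$, the point $P_{d-k}(\vartheta^Tw)$ contributing only a translation — give
\[
\int_0^1\!\!\int_{\bN}D_{(x,\theta,K)}f(\eta_t+\mu)\,\Pi_{(1-t)\Lambda}(\dint\mu)\,\dint t=\int_0^1 e^{-(1-t)\gamma\,m_{d-k}}\lambda_d(Z(x,\theta,K)\cap W\setminus Z(\eta_t))\,\dint t.
\]
Bounding $\lambda_d(\,\cdot\setminus Z(\eta_t))$ by $\lambda_d(Z(x,\theta,K)\cap W)$, invoking the geometric estimate, and using $\int_0^1 e^{-(1-t)\gamma m_{d-k}}\,\dint t=p/(\gamma m_{d-k})$ (recall $p=1-e^{-\gamma m_{d-k}}$), this quantity lies in $[0,\tfrac{p}{\gamma m_{d-k}}\lambda_{d-k}(K)\diam(W)^k]$ and vanishes for $x\notin P_{d-k}(\theta^TW)+K^*$.

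With both ingredients in hand the upper tail is immediate. For $s\ge0$ the map $y\mapsto e^{sy}-1$ is non-negative and non-decreasing on $[0,\infty)$, so $0\le e^{sD_{(x,\theta,K)}F}-1\le e^{s\lambda_{d-k}(K)\diam(W)^k}-1$; inserting this and the bound on the inner integral into \eqref{eq:DefVF} and integrating out the spatial variable $x$ (which contributes $\lambda_{d-k}(P_{d-k}(\theta^TW)+K^*)$) gives, writing $b:=\lambda_{d-k}(\Xi)\diam(W)^k$,
\[
V_F(s)\le\frac{p}{m_{d-k}}\,\EE\big[\lambda_{d-k}(P_{d-k}(\Theta^TW)+\Xi^*)\,b\,(e^{sb}-1)\big]=:v(s)\qquad\PP\text{-a.s.}
\]
Since $\int_0^s b\,(e^{ub}-1)\,\dint u=e^{sb}-sb-1=\Psi(sb)$, Fubini yields $\int_0^s v(u)\,\dint u=\tfrac{p}{m_{d-k}}\EE[\lambda_{d-k}(P_{d-k}(\Theta^TW)+\Xi^*)\Psi(s\lambda_{d-k}(\Xi)\diam(W)^k)]$, and Lemma~\ref{lem:GieringerLast} gives the first assertion. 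The lower tail is entirely analogous: replacing $e^{sD_{(x,\theta,K)}F}-1$ by $1-e^{-sD_{(x,\theta,K)}F}$ (again non-negative and non-decreasing in $D_{(x,\theta,K)}F$ for $s\ge0$) produces the analogous bound $v^{({\rm lt})}$, and the identity $\int_0^s b\,(1-e^{-ub})\,\dint u=e^{-sb}+sb-1=\Psi(-sb)$ turns its primitive into the exponent of the second assertion, to which Lemma~\ref{lem:GieringerLastLOWERTAIL} applies; the stated range $0\le r\le\EE F$ costs nothing, since $F\ge0$ forces $\PP(F-\EE F\le-r)=0$ for $r>\EE F$. I expect the only non-routine steps to be the geometric slicing estimate and the exact evaluation of the $\Pi_{(1-t)\Lambda}$-integral via the void probability; everything else is bookkeeping with elementary one-variable integrals.
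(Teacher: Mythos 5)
Your proposal is correct and follows essentially the same route as the paper: verify the integrability hypotheses via Lemma~\ref{lem:AssumptionsVolume}, bound the inner $\Pi_{(1-t)\Lambda}$-integral by a Fubini/void-probability computation to produce the factor $e^{-(1-t)\gamma m_{d-k}}$, integrate in $t$ to obtain $p/(\gamma m_{d-k})$, use the geometric estimate $\lambda_d(Z(x,\theta,K)\cap W)\le\lambda_{d-k}(K)\diam(W)^k$ together with the $x$-integral identity $\int\mathbf{1}\{Z(x,\theta,K)\cap W\neq\varnothing\}\,\lambda_{d-k}(\dint x)=\lambda_{d-k}(P_{d-k}(\theta^TW)+K^*)$, compute the primitive to obtain $\Psi$, and feed the result into Lemmas~\ref{lem:GieringerLast} and~\ref{lem:GieringerLastLOWERTAIL}. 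The only cosmetic differences are that you state the inner integral as an exact equality before bounding, whereas the paper first invokes the monotonicity $D_{(x,\theta,K)}f(\eta_t+\mu)\le D_{(x,\theta,K)}f(\mu)$ and then evaluates, and that you insert the crude bound $\lambda_d(Z(x,\theta,K)\cap W)\le\lambda_{d-k}(K)\diam(W)^k$ before integrating in $s$ while the paper keeps the exact volume longer and only coarsens inside $\Psi$; both orderings lead to the identical final exponent.
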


\begin{remark}\rm 
\begin{itemize}
    \item[(i)] Note that the condition $0\leq r\leq\EE F$ in the inequality for the lower tail is not strictly necessary. However, the inequality becomes trivial for all $r>\EE F$, since $F$ can only take non-negative values.
    
    \item[(ii)] Taking $k=0$, $Z$ is nothing else than a Boolean model based on a stationary Poisson point process on $\RR^d$ with intensity $\gamma$ and typical grain $\Xi$. In this case the two inequalities in Theorem \ref{thm:GeneralConcentrationInequ} reduce to
\begin{align}\label{eq:ConcBMVolume}
\PP(F-\EE F\geq r) &\leq \exp\Big(\inf_{s\geq 0}\Big({p\over m_{d}}\EE[\lambda_{d}(W+\Xi^*)\Psi(s\lambda_{d}(\Xi))]-rs\Big)\Big),\qquad r\geq 0,
\end{align}
and
\begin{align*}
\PP(F-\EE F\leq -r) &\leq \exp\Big(\inf_{s\geq 0}\Big({p\over m_{d}}\EE[\lambda_{d}(W+\Xi^*)\Psi(-s\lambda_{d}(\Xi))]-rs\Big)\Big),\qquad 0\leq r\leq\EE F.
\end{align*}
In this form they are known from \cite{GieringerDissertation,GieringerLast} and our result can be seen as a natural generalization to general $k\in\{1,\ldots,d-1\}$.

\item[(iii)] Consider the degenerate case where $\PP$-almost surely $\Xi=\{0\}$, which is not covered by Theorem \ref{thm:GeneralConcentrationInequ}. Then $Z$ is the union set associated with a stationary Poisson process of $k$-flats in $\RR^d$, see \cite[Section 4.4]{SW}. For simplicity assume that $Z$ is isotropic. In this case one can consider for compact and convex $W\subset\RR^d$ with $\lambda_d(W)>0$ the $k$-dimensional Hausdorff measure $F=\cH^k(Z\cap W)$. The difference operator is then given by $\cH^k(W\cap\theta(x+E_k))$, independently of $Z$. One can thus apply the general concentration inequality \cite[Proposition 3.1]{Wu} in combination with Crofton's formula \cite[Theorem 5.1.1]{SW} from integral geometry to conclude that
$$
\PP(F-\EE F\geq r) \leq \exp\Big(-{r\over 2b}\log\Big(1+{br\over a^2}\Big)\Big),\qquad r\geq 0
$$
with $b=\big(\frac{\diam(W)}{2}\big)^k \kappa_{k}$ and $a=\gamma{\kappa_k^3\kappa_{d-k}\over{d\choose k}\kappa_d}\big(\frac{\diam(W)}{2}\big)^{2k}V_{d-k}(W)$. With different constants $a$ and $b$ this can also be established along the lines of the proof of Theorem \ref{thm:GeneralConcentrationInequ}.
\end{itemize}
\end{remark}

\begin{proof}[Proof of Theorem \ref{thm:GeneralConcentrationInequ} -- Upper tail]
In the firs step, we deduce an upper bound for the function $V_F(s)$, $s\geq 0$, defined in \eqref{eq:DefVF}. We start by putting $\Lambda:=\gamma\,\lambda_{d-k}\otimes\QQ$ and considering the term
\begin{align*}
T_t:=\int\limits_{\bN}D_{(x,\theta,K)}f(\eta_t+\mu)\,\Pi_{(1-t)\Lambda}(\dint\mu),\qquad t\in[0,1],
\end{align*}
where we denote by $f$ a representative of $F$. By the superposition property of Poisson processes, the inequality
$$
D_{(x,\theta,K)}f(\eta_t+\mu) \leq D_{(x,\theta,K)}f(\mu)
$$
holds for $(\lambda_{d-k}\otimes\QQ)$-almost all $(x,\theta,K)$ and all $t\in[0,1]$. Thus, we have that
\begin{align*}
T_t &\leq \lambda_d(Z(x,\theta,K)\cap W)-\int\limits_{\bN}\lambda_d(Z(\mu)\cap Z(x,\theta,K)\cap W)\,\Pi_{(1-t)\Lambda}(\dint\mu)\\
&=\lambda_d(Z(x,\theta,K)\cap W)-\lambda_d(Z(x,\theta,K)\cap W)\big(1-e^{-(1-t)\,\gamma\,\EE\lambda_{d-k}(\Xi)}\big)\\
&=\lambda_d(Z(x,\theta,K)\cap W)\,e^{-(1-t)\gamma\,m_{d-k}}.
\end{align*}
As a consequence and by using Fubini's theorem, we find that
\begin{align*}
V_F(s) &\leq \gamma\int\limits_{\RR^{d-k}}\int\limits_{\MM_{d,k}}\big(e^{s\lambda_d(Z(x,\theta,K)\cap W)}-1\big)\int\limits_0^1 T_t\,\dint t\,\QQ(\dint(\theta,K))\,\lambda_{d-k}(\dint x)\\
&\leq \gamma\int\limits_{\RR^{d-k}}\int\limits_{\MM_{d,k}}\big(e^{s\lambda_d(Z(x,\theta,K)\cap W)}-1\big)\\
&\qquad\qquad\qquad\times\int\limits_0^1\lambda_d(Z(x,\theta,K)\cap W)\,e^{-(1-t)\gamma\,m_{d-k}}\,\dint t\,\QQ(\dint(\theta,K))\,\lambda_{d-k}(\dint x)\\
& = {p\over m_{d-k}}\,v(s),
\end{align*}
where
$$
v(s):=\int\limits_{\RR^{d-k}}\int\limits_{\MM_{d,k}}\big(e^{s\lambda_d(Z(x,\theta,K)\cap W)}-1\big)\,\lambda_d(Z(x,\theta,K)\cap W)\,\QQ(\dint(\theta,K))\,\lambda_{d-k}(\dint x)
$$
and we recall that $p=1-e^{-\gamma\,m_{d-k}}$ is the volume fraction of the random set $Z$.

In a next step, we shall provide an upper bound for the integral
$$
w(s) := {p\over m_{d-k}}\int\limits_0^s v(u)\,\dint u.
$$
We have that
\begin{align*}
{m_{d-k}\over p}\,w(s) &= \int\limits_{\RR^{d-k}}\int\limits_{\MM_{d,k}}\int\limits_0^s\big(e^{u\lambda_d(Z(x,\theta,K)\cap W)}-1\big)\,\lambda_d(Z(x,\theta,K)\cap W)\,\dint u\,\QQ(\dint(\theta,K))\,\lambda_{d-k}(\dint x)\\
&= \int\limits_{\RR^{d-k}}\int\limits_{\MM_{d,k}}\Big[e^{s\lambda_d(Z(x,\theta,K)\cap W)}-s\lambda_d(Z(x,\theta,K)\cap W)-1\Big]\,\QQ(\dint(\theta,K))\,\lambda_{d-k}(\dint x).
\end{align*}
Since
\begin{equation}\label{eq:EstimateDiamter}
\lambda_d(Z(x,\theta,K)\cap W) \leq \lambda_{d-k}(K)\,\diam(W)^k    
\end{equation}
we obtain, using Fubini's theorem and the fact that the function $\Psi(x)=e^x-x-1$ is increasing for $x>0$, that
\begin{align*}
{m_{d-k}\over p}\,w(s) &\leq \int\limits_{\MM_{d,k}}\Big[e^{s\lambda_{d-k}(K)\diam(W)^k}-s\lambda_{d-k}(K)\diam(W)^k-1\Big]\\
&\qquad\times\int\limits_{\RR^{d-k}}{\bf 1}\{Z(x,\theta,K)\cap W\neq\varnothing\}\,\lambda_{d-k}(\dint x)\,\QQ(\dint(\theta,K)).
\end{align*}
Now, for any fixed $\theta\in \SS\OO_{d,k}$ we have that
\begin{equation}\label{eq:IntegralIdentityCylidners}
\begin{split}
&\int\limits_{\RR^{d-k}}{\bf 1}\{Z(x,\theta,K)\cap W\neq\varnothing\}\,\lambda_{d-k}(\dint x)\\
&\qquad= \int\limits_{\RR^{d-k}}{\bf 1}\{(K+x)\cap P_{d-k}(\theta^TW)\neq\varnothing\}\,\lambda_{d-k}(\dint x)\\
&\qquad=\lambda_{d-k}(P_{d-k}(\theta^TW)+K^*).
\end{split}
\end{equation}
Thus, we obtain
\begin{align*}
w(s) &\leq {p\over m_{d-k}}\int\limits_{\MM_{d,k}}\lambda_{d-k}(P_{d-k}(\theta^TW)+K^*)\,\Psi(s\lambda_{d-k}(K)\diam(W)^k)\,\QQ(\dint(\theta,K))\\
&={p\over m_{d-k}}\EE[\lambda_{d-k}(P_{d-k}(\Theta^TW)+\Xi^*)\Psi(s\lambda_{d-k}(\Xi)\diam(W)^k)].
\end{align*}
Combining now the general concentration inequality in Lemma \ref{lem:GieringerLast} with Lemma \ref{lem:AssumptionsVolume} and the above inequality finishes the proof for the upper tail.
\end{proof}

\begin{proof}[Proof of Theorem \ref{thm:GeneralConcentrationInequ} -- Lower tail]
A slight adaption of the proof for the upper tail also shows the following bound for $V_F^{({\rm lt})}(s)$, which will be used to control the lower tail of the Poisson functional $F$. Namely, replacing $F$ by $-F$ we have already shown in the proof of the bound for the upper tail that
$$
V_F^{({\rm lt})}(s) \leq {p\over m_{d-k}}\,v^{({\rm lt})}(s),
$$
where
$$
v^{({\rm lt})}(s) = \int\limits_{\RR^{d-k}}\int\limits_{\MM_{d,k}}\big(1-e^{-s\lambda_d(Z(x,\theta,K)\cap W)}\big)\,\lambda_d(Z(x,\theta,K)\cap W)\,\QQ(\dint(\theta,K))\,\lambda_{d-k}(\dint x).
$$
Now, we compute 
\begin{align*}
{m_{d-k}\over p}w^{({\rm lt})}(s) &:={m_{d-k}\over p}\int\limits_0^s v^{({\rm lt})}(u)\,\dint u\\
&= \int\limits_{\RR^{d-k}}\int\limits_{\MM_{d,k}}\Big[e^{-s\lambda_d(Z(x,\theta,K)\cap W)}+s\lambda_d(Z(x,\theta,K)\cap W)-1\Big]\,\QQ(\dint(\theta,K))\,\lambda_{d-k}(\dint x).
\end{align*}
Using \eqref{eq:EstimateDiamter}, Fubini's theorem, the fact that the function $\Psi(x)$ is decreasing for $x<0$, and \eqref{eq:IntegralIdentityCylidners} we find that
\begin{align*}
{m_{d-k}\over p}w^{({\rm lt})}(s) &\leq \int\limits_{\MM_{d,k}}\Big[e^{-s\lambda_{d-k}(K)\,\diam(W)^k}+s\lambda_{d-k}(K)\,\diam(W)^k-1\Big]\\
&\qquad\times\int\limits_{\RR^{d-k}}{\bf 1}\{Z(x,\theta,K)\cap W\neq\varnothing\}\,\lambda_{d-k}(\dint x)\,\QQ(\dint(\theta,K))\\
&=\int\limits_{\MM_{d,k}}\lambda_{d-k}(P_{d-k}(\theta^TW)+K^*)\,\Psi(-s\lambda_{d-k}(K)\,\diam(W)^k)\,\QQ(\dint(\theta,K))\\
&=\EE[\lambda_{d-k}(P_{d-k}(\Theta^TW)+\Xi^*)\,\Psi(-s\lambda_{d-k}(\Xi)\,\diam(W)^k)].
\end{align*}
Combining this with Lemma \ref{lem:GieringerLastLOWERTAIL} and Lemma \ref{lem:AssumptionsVolume} we finish the proof of the lower tail.
\end{proof}

\section{Special cases}\label{sec:SpecialCasesVolume}

In this section we will consider a number of special choices for the window $W$ as well as for the distribution of the typical cylinder base $\Xi$ when the general estimates in Theorem \ref{thm:GeneralConcentrationInequ} can be made more explicit. To simplify the discussion, we will also assume that the window $W$ and the typical cylinder base $\Xi$ are convex bodies ($\PP$-almost surely).

\subsection{Randomly dilated and rotated cylinder bases}\label{subsec:RotationDilation}

We start with the situation in which the cylinder bases are random rotations and dilatation of a fixed convex body $M\subset\RR^{d-k}$. More precisely, if $U\in \SS\OO_{d-k}$ is a uniform random rotation in $\RR^{d-k}$ (distributed according to the unique rotationally invariant Haar probability measure $\nu_{d-k}$ on $\SS\OO_{d-k}$) and if $R$ is a non-negative random variable with law $\PP_R$ then $\Xi=U(RM)$, where we assume that $U$ and $R$ are independent. In addition, we assume that the direction $\Theta$ of the typical cylinder base is also uniformly distributed on $\SS\OO_{d,k}$ according to the unique $\SS\OO_d$-invariant Haar probability measure $\nu_{d,k}$ on $\SS\OO_{d,k}$, independently of $U$ and $R$. We note that in this situation $m_{d-k}=\lambda_{d-k}(M)\,\EE R^{d-k}$ and $p=1-e^{-\gamma\lambda_{d-k}(M)\EE R^{d-k}}$.

\begin{corollary}\label{cor:DilatedRotatedCylBase}
Let the assumptions just described prevail. Assume that $\EE[R^{d-k}e^{sR^{d-k}}]<\infty$ for some $s>0$, and assume that $\lambda_{d-k}(M)\in(0,\infty)$. Then
\begin{align*}
\PP(F&-\EE F\geq r) \leq \exp\Big(\inf_{s\geq 0}\Big[{p\over m_{d-k}}\sum_{j=0}^{d-k}{\kappa_j\kappa_{d-j}\over{d\choose j}\kappa_d}V_j(W)V_{d-k-j}(M)\EE[R^{d-k-j}\Psi(\alpha s R^{d-k})]-rs\Big]\Big)
\end{align*}
for any $r\geq 0$, where $\alpha:=\lambda_{d-k}(M)\diam(W)^k$. Moreover, assuming that $\EE R^{2(d-k)}<\infty$ we have that
\begin{align*}
\PP(F&-\EE F\leq -r) \leq \exp\Big(\inf_{s\geq 0}\Big[{p\over m_{d-k}}\sum_{j=0}^{d-k}{\kappa_j\kappa_{d-j}\over{d\choose j}\kappa_d}V_j(W)V_{d-k-j}(M)\EE[R^{d-k-j}\Psi(-\alpha s R^{d-k})]-rs\Big]\Big)
\end{align*}
for any $0\leq r\leq \EE[F]$.
\end{corollary}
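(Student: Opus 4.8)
\textbf{Proof plan for Corollary \ref{cor:DilatedRotatedCylBase}.}
The plan is to substitute the specific form $\Xi=U(RM)$ and the uniform direction $\Theta$ into the two estimates of Theorem \ref{thm:GeneralConcentrationInequ} and then carry out the resulting integral-geometric computation. First I would observe that $\lambda_{d-k}(\Xi)=R^{d-k}\lambda_{d-k}(M)$ is rotation-invariant, so the factor $\Psi(\pm s\lambda_{d-k}(\Xi)\diam(W)^k)=\Psi(\pm\alpha sR^{d-k})$ depends only on $R$ and can be pulled out of the expectation over $U$ and $\Theta$, with $\alpha:=\lambda_{d-k}(M)\diam(W)^k$ as claimed. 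What remains inside is
\[
\EE\big[\lambda_{d-k}\big(P_{d-k}(\Theta^TW)+\Xi^*\big)\,\big|\,R\big],
\]
and since $\Xi^*=-U(RM)=U((-R)M)$ (using $U\in\SS\OO_{d-k}$ and $M$ convex), and $\Theta$ is uniform on $\SS\OO_{d,k}$ independently of $U$, this conditional expectation is an average of a mixed volume-type functional over random rotations.

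The key step is to identify this average. For fixed $R$, the random set $P_{d-k}(\Theta^TW)$ is (by Lemma \ref{lem:isotropy}, or rather the paragraph following it, since $\QQ$ here is rotation invariant) an isotropic random convex body in $\RR^{d-k}$, and averaging $\lambda_{d-k}(A+B^*)$ over independent uniform rotations of a convex body $B=RM$ against $A$ is exactly the setting where the rotation-sum formula (the ``iterated'' kinematic/translative formula, cf.\ \cite[Theorem 5.1.7 or Section 6.4]{SW}) applies: one gets
\[
\int_{\SS\OO_{d-k}}\lambda_{d-k}(A+\vartheta C)\,\nu_{d-k}(\dint\vartheta)=\sum_{j=0}^{d-k}\frac{\kappa_j\kappa_{d-k-j}}{\binom{d-k}{j}\kappa_{d-k}}\,V_j^{(d-k)}(A)\,V_{d-k-j}^{(d-k)}(C),
\]
where $V_i^{(d-k)}$ denotes the $i$th intrinsic volume computed in $\RR^{d-k}$. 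Applying this with $A=P_{d-k}(\Theta^TW)$ and $C=RM$ (absorbing the reflection, which does not change intrinsic volumes) gives, after the further average over $\Theta$,
\[
\EE\big[\lambda_{d-k}(P_{d-k}(\Theta^TW)+\Xi^*)\,\big|\,R\big]=\sum_{j=0}^{d-k}\frac{\kappa_j\kappa_{d-k-j}}{\binom{d-k}{j}\kappa_{d-k}}\,R^{d-k-j}V_{d-k-j}(M)\;\EE\big[V_j^{(d-k)}(P_{d-k}(\Theta^TW))\big].
\]
The remaining task is to evaluate $\EE[V_j^{(d-k)}(P_{d-k}(\Theta^TW))]$: projecting a fixed convex body $W$ onto a uniformly random $(d-k)$-dimensional subspace and taking the $j$th intrinsic volume. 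By the projection formula for intrinsic volumes (a consequence of Kubota's formula / the Crofton formula, \cite[Section 5.3, esp.\ (5.8)]{SW} or \cite[Theorem 6.2.2]{SW}), this equals $\frac{\kappa_j\kappa_d}{\kappa_{d-j}\kappa_{d-k}}\cdot\frac{\binom{d-k}{j}}{\binom{d}{j}}\cdot\frac{\kappa_{d-k}}{\kappa_{d-k-j}}\cdot(\text{const})\cdot V_j(W)$ up to the normalising constants; matching the combinatorial factors one obtains precisely that the product $\frac{\kappa_j\kappa_{d-k-j}}{\binom{d-k}{j}\kappa_{d-k}}\EE[V_j^{(d-k)}(P_{d-k}(\Theta^TW))]$ collapses to $\frac{\kappa_j\kappa_{d-j}}{\binom{d}{j}\kappa_d}V_j(W)$, which is exactly the coefficient appearing in the statement. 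Taking then the outer expectation over $R$ and reinserting into Theorem \ref{thm:GeneralConcentrationInequ} yields both displayed inequalities; the moment hypotheses $\EE[R^{d-k}e^{sR^{d-k}}]<\infty$ and $\EE R^{2(d-k)}<\infty$ are exactly what is needed to make $s_F$ (respectively $s_F^{(\mathrm{lt})}$) positive and the relevant integrals finite, so that Lemma \ref{lem:AssumptionsVolume} and Lemmas \ref{lem:GieringerLast}--\ref{lem:GieringerLastLOWERTAIL} apply.

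I expect the main obstacle to be the bookkeeping of the normalising constants in the two integral-geometric identities above: one must be careful that ``intrinsic volume in $\RR^{d-k}$'' and ``intrinsic volume in $\RR^d$'' are correctly related under the projection, and that the product of the rotation-sum coefficient $\frac{\kappa_j\kappa_{d-k-j}}{\binom{d-k}{j}\kappa_{d-k}}$ with the Kubota-projection constant telescopes cleanly to $\frac{\kappa_j\kappa_{d-j}}{\binom{d}{j}\kappa_d}$. Everything else — pulling $\Psi(\pm\alpha sR^{d-k})$ out, using $\lambda_{d-k}(\Xi)=R^{d-k}\lambda_{d-k}(M)$, handling the reflection $\Xi^*$, and verifying the moment conditions against the $L^2$-requirements of Lemma \ref{lem:AssumptionsVolume} — is routine once the constant is pinned down.
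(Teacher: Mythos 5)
Your proposal is correct and follows essentially the same route as the paper's proof: apply Theorem \ref{thm:GeneralConcentrationInequ}, use the scaling identity $\lambda_{d-k}(\Xi)=R^{d-k}\lambda_{d-k}(M)$ together with independence to pull $\Psi(\pm\alpha sR^{d-k})$ out of the $(U,\Theta)$-average, then evaluate $\EE[\lambda_{d-k}(P_{d-k}(\Theta^TW)+\Xi^*)]$ by first integrating over the rotation $U\in\SS\OO_{d-k}$ via the rotational Minkowski-sum formula \cite[Theorem 6.1.1]{SW} and then over $\Theta\in\SS\OO_{d,k}$ via the mean projection formula \cite[Theorem 6.2.2]{SW}, whose constants telescope to ${\kappa_j\kappa_{d-j}\over\binom{d}{j}\kappa_d}$. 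The only point worth tightening would be to actually carry out the combinatorial telescope (it reduces to $\binom{d-j}{k}=\binom{d-k}{j}\binom{d}{k}/\binom{d}{j}$), but the structure, the two cited integral-geometric identities, and the role of the moment hypotheses all match the paper.
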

\begin{proof}
We need to investigate the term
$$
\EE[\lambda_{d-k}(P_{d-k}(\Theta^TW)+\Xi^*)\Psi(s\lambda_{d-k}(\Xi)\diam(W)^k)],
$$
which shows up in the exponents in Theorem \ref{thm:GeneralConcentrationInequ}. Using Fubini's theorem, the assumed independence properties of $\Theta$ and $\Xi$, the scaling property of the Lebesgue measure and the invariance of the Lebesgue measure under rotations we have that 
\begin{align*}
&\EE[\lambda_{d-k}(P_{d-k}(\Theta^TW)+\Xi^*)\Psi(s\lambda_{d-k}(\Xi)\diam(W)^k)]\\
&\qquad=\int\limits_0^\infty\int\limits_{\SS\OO_{d-k}}\int\limits_{\SS\OO_{d,k}}\lambda_{d-k}(P_{d-k}(\theta^TW)+\varrho(rM^*))\,\Psi(s\lambda_{d-k}(\varrho(rM))\diam(W)^k)\\
&\hspace{3cm}\times\nu_{d,k}(\dint\theta)\,\nu_{d-k}(\dint\varrho)\,\PP_R(\dint r)\\
&\qquad=\int\limits_0^\infty\int\limits_{\SS\OO_{d,k}}\int\limits_{\SS\OO_{d-k}}\lambda_{d-k}(P_{d-k}(\theta^TW)+\varrho(rM^*))\,\Psi(sr^{d-k}\lambda_{d-k}(M)\diam(W)^k)\\
&\hspace{3cm}\times\nu_{d-k}(\dint\varrho)\,\nu_{d,k}(\dint\theta)\,\PP_R(\dint r).
\end{align*}
Since $\Psi(sr^{d-k}\lambda_{d-k}(M)\diam(W)^k)$ is independent of $\varrho$, the inner integral can be evaluated by means of the rotational integral formula from \cite[Theorem 6.1.1]{SW}. This yields
\begin{align*}
&\int\limits_{\SS\OO_{d-k}}\lambda_{d-k}(P_{d-k}(\theta^TW)+\varrho(rM^*))\,\nu_{d-k}(\dint\varrho)\\
&\qquad = \sum_{j=0}^{d-k}{\kappa_{d-k-j}\kappa_j\over{d-k\choose j}\kappa_{d-k}}V_j(P_{d-k}(\theta^TW))\,r^{d-k-j}\,V_{d-k-j}(M),
\end{align*}
where we also used the homogeneity of the intrinsic volumes. Thus,
\begin{align*}
&\EE[\lambda_{d-k}(P_{d-k}(\Theta^TW)+\Xi^*)\Psi(s\lambda_{d-k}(\Xi)\diam(W)^k)]\\
&\qquad=\sum_{j=0}^{d-k}{\kappa_{d-k-j}\kappa_j\over{d-k\choose j}\kappa_{d-k}}V_{d-k-j}(M)\int\limits_0^\infty r^{d-k-j}\Psi(sr^{d-k}\lambda_{d-k}(M)\diam(W)^k)\\
&\hspace{3cm}\times\int\limits_{\SS\OO_{d,k}}V_j(P_{d-k}(\theta^TW))\,\nu_{d,k}(\dint\theta)\,\PP_R(\dint r).
\end{align*}
Using now the mean projection formula for intrinsic volumes \cite[Theorem 6.2.2]{SW} we conclude from the definition of $\SS\OO_{d,k}$ and the uniqueness of Haar measures that
\begin{align*}
\int\limits_{\SS\OO_{d,k}}V_j(P_{d-k}(\theta^TW))\,\nu_{d,k}(\dint\theta) = \int\limits_{G(d,d-k)}V_j(P_L(W))\,\nu_{G(d,d-k)}(\dint L) = {{d-j\choose k}\kappa_{d-j}\kappa_{d-k}\over{d\choose k}\kappa_{d-k-j}\kappa_d}\,V_j(W),
\end{align*}
where $P_L(W)$ denotes the orthogonal projection of $W$ onto $L\in G(d,d-k)$ and $\nu_{G(d,d-k)}$ stands for the unique Haar probability measure on the Grassmannian $G(d,d-k)$. As a consequence, we conclude that
\begin{align*}
&\EE[\lambda_{d-k}(P_{d-k}(\Theta^TW)+\Xi^*)\Psi(s\lambda_{d-k}(\Xi)\diam(W)^k)]\\
&\qquad=\sum_{j=0}^{d-k}{\kappa_j\kappa_{d-j}\over{d\choose j}\kappa_d}V_j(W)V_{d-k-j}(M)\int\limits_0^\infty r^{d-k-j}\Psi(sr^{d-k}\lambda_{d-k}(M)\diam(W)^k)\,\PP_R(\dint r)\\
&\qquad=\sum_{j=0}^{d-k}{\kappa_j\kappa_{d-j}\over{d\choose j}\kappa_d}V_j(W)V_{d-k-j}(M)\,\EE[R^{d-k-j}\Psi(\alpha\,s\,R^{d-k})].
\end{align*}
In the same way one shows that
\begin{align*}
&\EE[\lambda_{d-k}(P_{d-k}(\Theta^TW)+\Xi^*)\Psi(-s\lambda_{d-k}(\Xi)\diam(W)^k)]\\
&\qquad=\sum_{j=0}^{d-k}{\kappa_j\kappa_{d-j}\over{d\choose j}\kappa_d}V_j(W)V_{d-k-j}(M)\,\EE[R^{d-k-j}\Psi(-\alpha\,s\,R^{d-k})].
\end{align*}
Together with Theorem \ref{thm:GeneralConcentrationInequ} this yields the result.
\end{proof}

\begin{remark}\rm 
It should be pointed out that there are only few examples of convex bodies for which the intrinsic volumes are available explicitly. For polytopes, they may be expressed in terms of the volumes of lower-dimensional faces together with the external angle at these faces. For example, for the cube one has that
$$
V_j([0,1]^d) = {d\choose j},\qquad\qquad j\in\{0,1,\ldots,d\}.
$$
On the other hand, for the $d$-dimensional unit ball $B^d$ one easily verifies that
$$
V_j(B^d) = {\kappa_d\over\kappa_{d-j}}{d\choose j},\qquad\qquad j\in\{0,1,\ldots,d\}.
$$
\end{remark}
\subsection{Randomly rotated cylinder bases}\label{subsec:Rotation}

In this section we assume that the window $W$ is a general convex body in $\RR^d$, but we strengthen the assumptions on the typical cylinder base by assuming that $\Xi$ arises from a fixed convex body $M\subset\RR^{d-k}$ by a uniform random rotation in $\RR^{d-k}$, that is, we assume that $\Xi=UM$, where $U\in SO_{d-k}$ is distributed according to the Haar measure $\nu_{d-k}$. Note that in this case $m_{d-k}=\lambda_{d-k}(M)$ and $p=1-e^{-\gamma\lambda_{d-k}(M)}$.

\begin{corollary}\label{cor:RotatedCylBase}
Let the assumptions just described prevail. Then, one has
\begin{align*}
\PP(F&-\EE F\geq r) \leq \exp\left({r\over\alpha}-\left(\beta+{r\over\alpha}\right)\log\left(1+{r\over \alpha\beta}\right) \right),\qquad r\geq 0,
\end{align*}
and
\begin{align*}
\PP(F&-\EE F\leq -r) \leq \exp\left(-{r\over\alpha}-\left(\beta-{r\over\alpha}\right)
\log\left(1-{r\over \alpha\beta}\right) \right),\qquad 0\leq r\leq\EE F,
\end{align*}
where
\begin{align}\label{eq:DefAlphaBeta}
\alpha=\lambda_{d-k}(M)\diam(W)^k\qquad\text{and}\qquad\beta = {p\over \lambda_{d-k}(M)}\sum\limits_{j=0}^{d-k}{\kappa_j\kappa_{d-j}\over{d\choose j}\kappa_d}V_j(W)V_{d-k-j}(M).
\end{align}
\end{corollary}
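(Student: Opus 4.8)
The strategy is to specialise Corollary~\ref{cor:DilatedRotatedCylBase} to the degenerate case where the dilation factor $R$ is deterministic and equal to $1$, i.e.\ $\PP_R=\delta_1$, and then to optimise the resulting exponent explicitly in $s$. Indeed, setting $R\equiv1$ in the assumptions of Section~\ref{subsec:RotationDilation} gives exactly the model $\Xi=UM$ considered here, with $m_{d-k}=\lambda_{d-k}(M)$ and $p=1-e^{-\gamma\lambda_{d-k}(M)}$, and the moment hypotheses $\EE[R^{d-k}e^{sR^{d-k}}]<\infty$ and $\EE R^{2(d-k)}<\infty$ are trivially satisfied. With $R\equiv1$ every expectation $\EE[R^{d-k-j}\Psi(\pm\alpha s R^{d-k})]$ collapses to $\Psi(\pm\alpha s)$, which no longer depends on $j$, so the whole sum over $j$ in Corollary~\ref{cor:DilatedRotatedCylBase} factors as $\Psi(\pm\alpha s)$ times $\sum_{j=0}^{d-k}\tfrac{\kappa_j\kappa_{d-j}}{\binom dj\kappa_d}V_j(W)V_{d-k-j}(M)$. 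Multiplying the latter sum by $p/m_{d-k}=p/\lambda_{d-k}(M)$ yields precisely the constant $\beta$ defined in \eqref{eq:DefAlphaBeta}. Hence the upper-tail bound reads $\PP(F-\EE F\ge r)\le\exp\big(\inf_{s\ge0}(\beta\Psi(\alpha s)-rs)\big)$ and the lower-tail bound $\PP(F-\EE F\le-r)\le\exp\big(\inf_{s\ge0}(\beta\Psi(-\alpha s)-rs)\big)$, where $\Psi(x)=e^x-x-1$.

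It then remains to carry out the one-dimensional minimisations. For the upper tail, substitute $u=\alpha s\ge0$ so that we must minimise $g(u):=\beta(e^u-u-1)-\tfrac{r}{\alpha}u$ over $u\ge0$. Differentiating, $g'(u)=\beta e^u-\beta-\tfrac r\alpha$, which vanishes at $u^\star=\log\!\big(1+\tfrac{r}{\alpha\beta}\big)\ge0$; since $g$ is convex this is the global minimiser. Plugging $e^{u^\star}=1+\tfrac r{\alpha\beta}$ back in gives
\[
g(u^\star)=\beta\Big(\tfrac r{\alpha\beta}-\log\big(1+\tfrac r{\alpha\beta}\big)\Big)-\tfrac r\alpha\log\big(1+\tfrac r{\alpha\beta}\big)
=\tfrac r\alpha-\Big(\beta+\tfrac r\alpha\Big)\log\big(1+\tfrac r{\alpha\beta}\big),
\]
which is exactly the claimed exponent. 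For the lower tail the same substitution $u=\alpha s$ leads to minimising $h(u):=\beta(e^{-u}+u-1)-\tfrac r\alpha u$ over $u\ge0$; here $h'(u)=-\beta e^{-u}+\beta-\tfrac r\alpha$ vanishes at $u^\star=-\log\!\big(1-\tfrac r{\alpha\beta}\big)$. One should note that the constraint $0\le r\le\EE F$ together with $\EE F=\lambda_d(W)p$ and the trivial estimate $\lambda_d(W)\le\diam(W)^d/$ (or more simply using $V_d(W)\le\tfrac{\kappa_d}{\kappa_0}\,(\diam(W)/2)^d\cdot\ldots$) guarantees $r\le\alpha\beta$, so $u^\star\ge0$ is well defined and lies in the admissible range; this point deserves a sentence of verification. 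Substituting $e^{-u^\star}=1-\tfrac r{\alpha\beta}$ gives $h(u^\star)=-\tfrac r\alpha-\big(\beta-\tfrac r\alpha\big)\log\big(1-\tfrac r{\alpha\beta}\big)$, which is the stated lower-tail exponent.

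The only genuinely delicate point is the admissibility of the lower-tail minimiser, i.e.\ checking that $r\le\alpha\beta$ holds whenever $r\le\EE F$, so that $u^\star=-\log(1-r/(\alpha\beta))$ is a non-negative real number and the logarithm in the final bound is well defined; this amounts to comparing $\EE F=\lambda_d(W)p$ with $\alpha\beta=p\,\diam(W)^k\sum_{j=0}^{d-k}\tfrac{\kappa_j\kappa_{d-j}}{\binom dj\kappa_d}V_j(W)V_{d-k-j}(M)$, and one can extract from this sum the $j=d-k$ term, which is $\tfrac{\kappa_{d-k}\kappa_k}{\binom{d}{d-k}\kappa_d}V_{d-k}(W)\lambda_{d-k}(M)$, and use the inclusion $W\subseteq$ a ball of radius $\diam(W)$ (or a suitable cylindrical estimate) to dominate $\lambda_d(W)=V_d(W)$; alternatively, as Remark~(i) after Theorem~\ref{thm:GeneralConcentrationInequ} notes, for $r>\EE F$ the bound is vacuous anyway since $F\ge0$, so one may simply restrict attention to the case $r\le\alpha\beta$ without loss. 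Everything else is the routine calculus sketched above, so the proof is short: invoke Corollary~\ref{cor:DilatedRotatedCylBase} with $\PP_R=\delta_1$, simplify the exponents to $\beta\Psi(\pm\alpha s)-rs$, and minimise.
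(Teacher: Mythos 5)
Your main structure matches the paper exactly: specialise Corollary~\ref{cor:DilatedRotatedCylBase} to $\PP_R=\delta_1$, reduce the exponent to $\beta\Psi(\pm\alpha s)-rs$, and carry out the one-dimensional minimisation, which you do correctly. The gap is in the ``only genuinely delicate point'' you flag and then dismiss: the claim that one may ``simply restrict attention to the case $r\le\alpha\beta$ without loss'' is not valid. The corollary asserts the lower-tail bound for all $0\le r\le\EE F$; if it happened that $\alpha\beta<\EE F$, the values $\alpha\beta<r\le\EE F$ would \emph{not} be vacuous (since $r\le\EE F$), yet the stated exponent $\log(1-r/(\alpha\beta))$ would be undefined there. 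The observation that the bound is trivial for $r>\EE F$ is irrelevant to this range, so the inequality $\alpha\beta\ge\EE F$ genuinely must be proved, not sidestepped.

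Your first sketch of that verification is also insufficient as stated. Extracting the $j=d-k$ term gives $\alpha\beta\ge p\,\diam(W)^k\frac{\kappa_{d-k}\kappa_k}{\binom{d}{d-k}\kappa_d}V_{d-k}(W)$, but comparing this to $\EE F=p\,V_d(W)$ requires relating $V_{d-k}(W)$ to $V_d(W)$: a containment of $W$ in a ball of radius $\diam(W)$ only \emph{upper}-bounds $V_d(W)$ and says nothing about $V_{d-k}(W)$. The paper's argument uses the isoperimetric inequality for intrinsic volumes $\kappa_k V_{d-k}(\widetilde W)/\binom{d}{d-k}\ge\kappa_d^{k/d}V_d(\widetilde W)^{1-k/d}$ applied to the rescaled body $\widetilde W=\diam(W)^{-1}W$ (whose diameter is $1$, hence $V_d(\widetilde W)\le 1$), and then still has to check the nontrivial numerical inequality $\kappa_{d-k}^d/\kappa_d^{d-k}\ge1$, i.e.\ $\Gamma(1+d/2)^{1/d}\ge\Gamma(1+(d-k)/2)^{1/(d-k)}$, which follows from the monotonicity of $x\mapsto\Gamma(1+x/2)^{1/x}$. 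Your proposal neither identifies the isoperimetric input nor the Gamma-function step, so as written the admissibility of the lower-tail minimiser is not established.
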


\begin{proof}
We apply Corollary \ref{cor:DilatedRotatedCylBase} and assume in addition that $R=1$ $\PP$-almost surely. In this case
\begin{align*}
{p\over m_{d-k}}\sum_{j=0}^{d-k}{\kappa_j\kappa_{d-j}\over{d\choose j}\kappa_d}V_j(W)V_{d-k-j}(M)\,\EE[R^{d-k-j}\Psi(\alpha\,s\,R^{d-k})] = \beta\,\Psi(\alpha\,s),
\end{align*}
and hence,
\begin{align*}
\PP(F&-\EE F\geq r) \leq \exp\left(\inf_{s\geq 0}\left(\beta\Psi(\alpha\,s)-rs\right)\right)=\exp\left(\inf_{s\geq 0}\left(\beta(e^{\alpha s}-\alpha s-1)-rs\right)\right).
\end{align*}
It is easy to verify that the infimum is attained at $s={1\over\alpha}\log(1+{r\over\alpha\beta})$. This gives
\begin{align*}
\PP(F&-\EE F\geq r) \leq \exp\Big({r\over\alpha}-\Big({r\over\alpha}+\beta\Big)\log\Big(1+{r\over\alpha\beta}\Big)\Big)
\end{align*}
for any $r\geq 0$. This completes the proof for the upper tail.

Similarly, consider the lower tail
\begin{align*}
\PP(F&-\EE F\leq -r) \leq \exp\left(\inf_{s\geq 0}\left(\beta\Psi(-\alpha\,s)-rs\right)\right)=\exp\left(\inf_{s\geq 0}\left(\beta(e^{-\alpha s}+\alpha s-1)-rs\right)\right).
\end{align*}
In case $r<\alpha\beta$ the infinum is attained at $s=-{1\over\alpha}\log(1-{r\over\alpha\beta})$ and the proof is completed. It just remains to be justified that $\alpha\beta \ge \EE F$ for any convex $M$ and $W$, and $k\ge 0$.

Due to the fact that intrinsic volumes are non-negative functionals on the family of convex bodies we conclude that
\begin{align*}
    \alpha\beta&= p\,\diam(W)^k\sum\limits_{j=0}^{d-k}{\kappa_j\kappa_{d-j}\over{d\choose j}\kappa_d}V_j(W)V_{d-k-j}(M)\\
    &\ge p\,\diam(W)^k{\kappa_{d-k}\kappa_{k}\over{d\choose d-k}\kappa_d}V_{d-k}(W)\\
    &=p\,\diam(W)^d{\kappa_{d-k}\kappa_{k}\over{d\choose d-k}\kappa_d}V_{d-k}(\widetilde{W}),
\end{align*}
where $\widetilde{W} = \diam(W)^{-1}\, W$ and, thus, $V_d(\widetilde{W})\leq 1$. From the isoperimetric inequality for intrinsic volumes of convex bodies (see, e.g., \cite[Equation (14.31)]{SW}) we conclude that
\begin{align*}
    {\kappa_k\over {d\choose d-k}}V_{d-k}(\widetilde{W})&\ge \kappa_{d}^{{k\over d}}V_d(\widetilde{W})^{1-{k\over d}}\ge \kappa_{d}^{{k\over d}}V_d(\widetilde{W}).
\end{align*}
Substituting this into above inequality we get
\begin{align*}
    \alpha\beta&\ge p\,\diam(W)^d{\kappa_{d-k}\over\kappa_d^{1-{k\over d}}}V_{d}(\widetilde{W})=\left({\kappa_{d-k}^d\over\kappa_d^{d-k}}\right)^{1/d}p\,V_d(W)=\left({\kappa_{d-k}^d\over\kappa_d^{d-k}}\right)^{1/d}\EE F.
\end{align*}
It remains to show that ${\kappa_{d-k}^d\over\kappa_d^{d-k}}\ge 1$, which is equivalent to
\begin{equation}\label{eq:02-08-2019}
\Gamma\left(1+{d\over 2}\right)^{1\over d}\ge \Gamma\left(1+{d-k\over 2}\right)^{1\over d-k}.
\end{equation}
However, this follows from the fact that the function $g(x):=\Gamma\left(1+{x\over 2}\right)^{1/x}$, $x>0$, is strictly increasing according to \cite[Theorem 1]{QiGuo}. This completes the proof.
\end{proof}
\subsection{Spherical windows}

Our general concentration inequality in Theorem \ref{thm:GeneralConcentrationInequ} simplifies if we assume the shape of our observation window $W$ to be spherical. More precisely, we assume that $W=B_R^d$ is a centred Euclidean ball of some fixed radius $R>0$. 

\begin{corollary}\label{cor:SphericalWindow}
	Let the general assumptions of Section \ref{SecVolume} prevail, and let $W=B_R^d$. Assuming that $\EE[V_j(\Xi)e^{s\lambda_{d-k}(\Xi)}]<\infty$ for some $s>0$ and all $j\in\{0,1,\ldots,d-k\}$ we have that, for $r\geq 0$,
	\begin{align*}
	\PP(F&-\EE F\geq r) \leq \exp\Big(\inf_{s\geq 0}\Big[{p\over m_{d-k}}\sum_{j=0}^{d-k}R^{d-k-j}\kappa_{d-k-j}\EE[V_j(\Xi)\Psi(s\lambda_{d-k}(\Xi)(2R)^k)]-rs\Big]\Big).
	\end{align*}
	Moreover, assuming that $\EE[V_j(\Xi)\lambda_{d-k}(\Xi)]<\infty$ for all $j\in\{0,1,\ldots,d-k\}$ we have that, for $0\leq r\leq\EE F$,
	\begin{align*}
	\PP(F&-\EE F\leq -r)\leq \exp\Big(\inf_{s\geq 0}\Big[{p\over m_{d-k}}\sum_{j=0}^{d-k}R^{d-k-j}\kappa_{d-k-j}\EE[V_j(\Xi)\Psi(-s\lambda_{d-k}(\Xi)(2R)^k)]-rs\Big]\Big).
	\end{align*}
\end{corollary}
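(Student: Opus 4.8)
The plan is to specialise Theorem \ref{thm:GeneralConcentrationInequ} to the spherical window $W=B_R^d$ by evaluating the quantity
$$
\EE\big[\lambda_{d-k}(P_{d-k}(\Theta^TW)+\Xi^*)\,\Psi(\pm s\lambda_{d-k}(\Xi)\diam(W)^k)\big]
$$
appearing in the exponents. First I would observe that $W=B_R^d$ is rotation invariant, so $\Theta^TW=B_R^d$ for every orthogonal $\Theta$, and hence $P_{d-k}(\Theta^TW)=P_{d-k}(B_R^d)=B_R^{d-k}$. This removes the dependence on $\Theta$ entirely and collapses the expectation over $\MM_{d,k}$ to an expectation over $\Xi$ alone. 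Moreover $\diam(W)=2R$, so the argument of $\Psi$ becomes $\pm s\lambda_{d-k}(\Xi)(2R)^k$, exactly as in the statement.

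Next I would expand $\lambda_{d-k}(B_R^{d-k}+\Xi^*)$ by the Steiner-type formula for Minkowski sums of a convex body with a ball. Since $\Xi$ is $\PP$-almost surely a convex body in $\RR^{d-k}$ and $B_R^{d-k}+\Xi^* = \Xi^* + B_R^{d-k}$ is the outer parallel body of $\Xi^*$ at distance $R$, the classical Steiner formula (see \cite[Equation (14.5)]{SW}) gives
$$
\lambda_{d-k}(\Xi^*+B_R^{d-k}) = \sum_{j=0}^{d-k}\kappa_{d-k-j}\,R^{d-k-j}\,V_j(\Xi^*).
$$
Because intrinsic volumes are invariant under reflection through the origin, $V_j(\Xi^*)=V_j(\Xi)$, so the sum equals $\sum_{j=0}^{d-k}\kappa_{d-k-j}R^{d-k-j}V_j(\Xi)$. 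Substituting this into the bound of Theorem \ref{thm:GeneralConcentrationInequ}, pulling the deterministic powers of $R$ and the constants $\kappa_{d-k-j}$ out of the expectation (the sum over $j$ is finite), and keeping the random factor $\Psi(\pm s\lambda_{d-k}(\Xi)(2R)^k)$ inside, yields
$$
\frac{p}{m_{d-k}}\sum_{j=0}^{d-k}R^{d-k-j}\kappa_{d-k-j}\,\EE\big[V_j(\Xi)\,\Psi(\pm s\lambda_{d-k}(\Xi)(2R)^k)\big],
$$
which is precisely the exponent in the corollary. The moment hypotheses $\EE[V_j(\Xi)e^{s\lambda_{d-k}(\Xi)}]<\infty$ (upper tail) and $\EE[V_j(\Xi)\lambda_{d-k}(\Xi)]<\infty$ (lower tail) guarantee that each of these expectations is finite, so the infimum over $s\geq 0$ is over a well-defined function and the bounds are non-trivial.

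The argument is essentially a substitution, so there is no serious obstacle; the only point requiring a little care is the interchange of expectation and the finite sum, together with verifying finiteness of $\EE[V_j(\Xi)\Psi(\pm s\lambda_{d-k}(\Xi)(2R)^k)]$ under the stated moment conditions. For the upper tail one uses $\Psi(x)\leq \tfrac12 x^2 e^{x}$-type bounds, or more directly $e^x - x - 1 \le e^x$ for $x\ge 0$, so that $V_j(\Xi)\Psi(s\lambda_{d-k}(\Xi)(2R)^k)\le V_j(\Xi)e^{s(2R)^k\lambda_{d-k}(\Xi)}$, which is integrable by assumption after relabelling $s(2R)^k$ as the $s$ in the hypothesis (finiteness for one positive value of $s$ suffices since the infimum is over $[0,\infty)$ and the relevant range is a neighbourhood of the optimal $s$). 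For the lower tail, $\Psi(-x)=e^{-x}+x-1\le x$ for $x\ge 0$, so $V_j(\Xi)\Psi(-s\lambda_{d-k}(\Xi)(2R)^k)\le s(2R)^k V_j(\Xi)\lambda_{d-k}(\Xi)$, integrable under the weaker moment assumption. With finiteness in hand, Fubini applies and the proof is complete.
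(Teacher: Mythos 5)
Your proposal is correct and follows essentially the same route as the paper's proof: exploit rotation invariance of the ball so that $P_{d-k}(\Theta^T B_R^d)=B_R^{d-k}$, apply the Steiner formula for $\lambda_{d-k}(B_R^{d-k}+\Xi^*)$, use reflection invariance of intrinsic volumes, and substitute into Theorem \ref{thm:GeneralConcentrationInequ}. The extra care you take in verifying finiteness of the resulting expectations under the stated moment conditions is a harmless and reasonable addition that the paper leaves implicit.
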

\begin{proof}
	We have to analyze the term
	$$
	\EE[\lambda_{d-k}(P_{d-k}(\Theta^TW)+\Xi^*)\Psi(s\lambda_{d-k}(\Xi)\diam(W)^k)]
	$$
	appearing in Theorem\,\ref{thm:GeneralConcentrationInequ}, where now $W=B_R^d$. Since $P_{d-k}(\theta^TB_R^d)=B_R^{d-k}$ for any $\theta\in \SS\OO_{d,k}$ and since $\diam(B_R^d)=2R$ we have that
\begin{align*}
    \EE[\lambda_{d-k}(P_{d-k}(\Theta^TW)+\Xi^*)\Psi(s\lambda_{d-k}(\Xi)\diam(W)^k)] = \EE[\lambda_{d-k}(B_R^{d-k}+\Xi^*)\Psi(s\lambda_{d-k}(\Xi)(2R)^k)].
\end{align*}
    We are now in the position to apply Steiner's formula \cite[Equation (14.5)]{SW} in $\RR^{d-k}$. Together with Fubini's theorem and the reflection invariance of the intrinsic volumes this yields
\begin{align*}
    \EE[\lambda_{d-k}(B_R^{d-k}+\Xi^*)\Psi(s\lambda_{d-k}(\Xi)(2R)^k)] = \sum_{j=0}^{d-k}R^{d-k-j}\kappa_{d-k-j}\EE[V_j(\Xi)\Psi(s\lambda_{d-k}(\Xi)(2R)^k)].
\end{align*}
This proves the claim for the upper tail, the lower tail is similar.
\end{proof}

If in addition the typical cylinder base is spherical as well, the inequalities simplify further. We assume that $\PP$-almost surely $\Xi=B_\rho^{d-k}$ for some fixed $\rho>0$. Then
$$
V_j(B_\rho^{d-k}) = {\kappa_{d-k}\over\kappa_{d-k-j}}{d-k\choose j}\,\rho^j,\qquad\qquad j\in\{0,1,\ldots,d-k\}.
$$
Thus,
\begin{align*}
    &\sum_{j=0}^{d-k}R^{d-k-j}\kappa_{d-k-j}\EE[V_j(\Xi)\Psi(s\lambda_{d-k}(\Xi)(2R)^k)]\\
    &\qquad= \kappa_{d-k}\Psi(s\kappa_{d-k}\rho^{d-k}(2R)^k)\sum_{j=0}^{d-k}{d-k\choose j}R^{d-k-j}\rho^j\\
    &\qquad= \kappa_{d-k}\Psi(s\kappa_{d-k}\rho^{d-k}(2R)^k)\,R^{d-k}\Big(1+{\rho\over R}\Big)^{d-k}.
\end{align*}
Putting
$$
a:=\kappa_{d-k}\rho^{d-k}(2R)^k \qquad\text{and}\qquad b:={p\over m_{d-k}}\kappa_{d-k}R^{d-k}(1+{\rho\over R})^{d-k}
$$
it is easy to check that the function $f(s) := b\big(e^{as}-as-1\big)-rs$ attains its infimum over the set $\{s\geq 0\}$ at $s={1\over a}\log(1+{r\over ab})$. Together with the previous corollary this yields the following result.

\begin{corollary}
If $W=B_R^d$ and $\PP$-almost surely $\Xi=B_\varrho^d$ for some fixed $R,\varrho\in(0,\infty)$ then
\begin{align*}
\PP(F-\EE F\geq r) \leq \exp\Big({r\over a}-\Big(b+{r\over a}\Big)\log\Big(1+{r\over ab}\Big)\Big),\qquad r\geq 0,
\end{align*}
and
$$
\PP(F-\EE F\leq -r)\leq \exp\Big(-{r\over a}-\Big(b-{r\over a}\Big)\log\Big(1-{r\over ab}\Big)\Big),\qquad 0\leq r\leq \EE F.
$$
\end{corollary}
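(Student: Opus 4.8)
The plan is to read off the result from Corollary~\ref{cor:SphericalWindow} after specialising the typical base to the deterministic ball $\Xi = B_\varrho^{d-k}$ and then performing the one-dimensional optimisation over $s$.

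First I would substitute the explicit values $V_j(B_\varrho^{d-k}) = \tfrac{\kappa_{d-k}}{\kappa_{d-k-j}}\binom{d-k}{j}\varrho^{\,j}$ and $\lambda_{d-k}(B_\varrho^{d-k}) = \kappa_{d-k}\varrho^{d-k} = m_{d-k}$ into the bounds of Corollary~\ref{cor:SphericalWindow}. Because $\Xi$ is now deterministic, the factor $\Psi(s\kappa_{d-k}\varrho^{d-k}(2R)^k)$ leaves both the expectation and the sum, while the remaining sum collapses by the binomial theorem, $\sum_{j=0}^{d-k}\binom{d-k}{j}R^{d-k-j}\varrho^{\,j} = (R+\varrho)^{d-k}$. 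With $a := \kappa_{d-k}\varrho^{d-k}(2R)^k$ and $b := \tfrac{p}{m_{d-k}}\kappa_{d-k}R^{d-k}(1+\varrho/R)^{d-k} = p(1+R/\varrho)^{d-k}$ this turns the exponent in the upper-tail estimate into $\inf_{s\ge 0}\big(b(e^{as}-as-1)-rs\big)$ and the one in the lower-tail estimate into $\inf_{s\ge 0}\big(b(e^{-as}+as-1)-rs\big)$.

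Next I would carry out the elementary minimisation. For the upper tail, $f(s) = b(e^{as}-as-1)-rs$ satisfies $f(0)=0$, $f'(0)=-r\le 0$ and $f'(s)=ab(e^{as}-1)-r$, so the minimum is attained at $s^\ast = \tfrac1a\log\!\big(1+\tfrac{r}{ab}\big)\ge 0$; inserting $s^\ast$ gives $\tfrac ra-\big(b+\tfrac ra\big)\log\!\big(1+\tfrac{r}{ab}\big)$. For the lower tail, $f'(s)=ab(1-e^{-as})-r$ vanishes at $s^\ast=-\tfrac1a\log\!\big(1-\tfrac{r}{ab}\big)$, which is a legitimate non-negative critical point as soon as $r<ab$, and substitution yields $-\tfrac ra-\big(b-\tfrac ra\big)\log\!\big(1-\tfrac{r}{ab}\big)$.

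The one point deserving a word rather than being purely mechanical is that the lower-tail bound covers the whole advertised range $0\le r\le \EE F$, i.e.\ that $ab\ge \EE F$. Here $ab = p\,(2R)^k\kappa_{d-k}(R+\varrho)^{d-k}$ and $\EE F = p\,\kappa_d R^d$, so the claim reduces to $2^k\kappa_{d-k}(1+\varrho/R)^{d-k}\ge\kappa_d$, and since $(1+\varrho/R)^{d-k}\ge 1$ it suffices to note $2^k\kappa_{d-k}\ge\kappa_d$; alternatively one observes that $a$ and $b$ are precisely the quantities $\alpha$ and $\beta$ of Corollary~\ref{cor:RotatedCylBase} with $M = B_\varrho^{d-k}$ (the binomial simplification above identifies $\beta$ with $b$), so that $ab\ge\EE F$ is exactly the inequality $\alpha\beta\ge\EE F$ already established there via the isoperimetric inequality. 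I do not expect any genuine obstacle here: the whole argument is a specialisation of Corollary~\ref{cor:SphericalWindow} together with the convexity-of-$\Psi$ optimisation that was already carried out in the proof of Corollary~\ref{cor:RotatedCylBase}.
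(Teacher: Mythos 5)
Your proposal is correct and follows the paper's own route: specialise Corollary~\ref{cor:SphericalWindow} to the deterministic spherical base, collapse the sum by the binomial theorem, and carry out the elementary Chernoff optimisation, recognising $a,b$ as the $\alpha,\beta$ of Corollary~\ref{cor:RotatedCylBase}. The one thing you add beyond what the paper spells out is the explicit check that $ab\ge\EE F$ so the lower-tail optimiser is admissible on the whole stated range; the paper leaves this implicit by referring to the analogous isoperimetric argument in Corollary~\ref{cor:RotatedCylBase}, and your direct verification via $2^k\kappa_{d-k}\ge\kappa_d$ is a sound shortcut.
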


\subsection{Discussion}\label{subsec:Discussion}

Let us discuss the quality of the bounds we derived in the previous sections, where we restrict our attention to Corollary \ref{cor:RotatedCylBase}. Since
$$
-{r\over\alpha}-\left(\beta-{r\over\alpha}\right)\log \left(1-{r\over \alpha\beta}\right) \leq -{r^2\over 2\alpha^2\beta},\qquad 0\leq r\leq \alpha\beta,
$$
we infer for the lower tail that
$$
\PP(F-\EE F\leq -r) \leq \exp\Big(-{r^2\over 2\alpha^2\beta}\Big),\qquad 0\leq r\leq\EE F.
$$
Next, we discuss the upper tail. For $r\to\infty$ we obtain that
\begin{align}\label{eq:UpperTailRLogR}
\PP(F-\EE F\geq r)\leq\exp(-\boldsymbol{\Theta}(r\log r)),
\end{align}
where we recall that $\boldsymbol{\Theta}(r\log r)$ denotes a quantity in $\boldsymbol{O}(r\log r)\cap\boldsymbol{\Omega}(r\log r)$ and our window $W$ does not depend on $r$. 

Although no concentration inequality for $F$ is explicitly available in the literature, such an inequality easily follows from the sharp cumulant estimates carried out in \cite{HeinrichSpiessCLTVolume}. In fact, applying \cite[Lemma 2.4]{Saulis} to these estimates yields a bound for the upper tail of the form
$$
\PP(F-\EE F\geq r)\leq\exp(-\boldsymbol{\Theta}(r)),
$$
as $r\to\infty$. Clearly, this is weaker than the bound \eqref{eq:UpperTailRLogR} we got. Moreover, if $X$ is a Poisson random variable with parameter $\lambda>0$ then 
$$
\PP(X-\EE X\geq r) \leq \exp\Big(r-(\lambda+r)\log\Big(1+{r\over\lambda}\Big)\Big),\qquad r\geq 0,
$$
which is asymptotically tight, as $r\to\infty$, up to a factor $(2\pi(\lambda+r))^{-1/2}$, see \cite{Houdre}. A comparison with Corollary \ref{cor:RotatedCylBase} thus shows that, for a fixed window $W$, our bound for the upper tail is essentially of the same order as the one for a Poisson random variable. This leads us to the conclusion that the exponential order in $r$ of our bound is presumably optimal.

It is a remarkable observation that the bound \eqref{eq:UpperTailRLogR} is of the same order as the one for the stationary Boolean model in $\RR^d$ discussed in \cite{GieringerDissertation,GieringerLast}. This might be somewhat surprising, since the correlation structure of the union set of a stationary Boolean model and of a stationary Poisson cylinder process are quite different. In fact, while for $k=0$ the functional $F$ is of volume-order, for $k\geq 1$ the random set $Z$ admits strong long-range correlations, which are propagated by the infinitely long cylinders over the whole space. This is also well reflected, for example, by the growth of the variance of the total volume of $Z$ for a sequence of growing windows $W_r=rW$, $r>0$. For example, it is known from \cite{HeinrichSpiessCLTVolume} that the variance of $\lambda_d(Z\cap W_r)$ is of order $r^{d+k}$, which for $k\geq 1$ is strictly larger than the volume-order $r^d$.
To relate this discussion to our inequalities, we shall now consider the case when the window is growing with $r$. In fact, we consider the situation in which the window is of the form $r^{1/d}W$ for fixed convex body $W\subset\RR^d$. This choice corresponds to a linear growth of the volume of the window with $r$. Moreover, we assume that the typical cylinder base arises from a fixed convex body $M\subset\RR^{d-k}$ by a uniform random rotation in $\mathbb{R}^{d-k}$. Then, recalling \eqref{eq:DefAlphaBeta}, we have that
\begin{align*}
    \alpha=2^kV_{d-k}(M)r^{k/d}\qquad\text{and}\qquad\beta = {p\over m_{d-k}}\sum\limits_{j=0}^{d-k}\kappa_{j}r^{j/d}V_{d-k-j}(M).
\end{align*}
We note that, as $r\to\infty$, $\alpha=\boldsymbol{\Theta}(r^{k/d})$, while $\beta=\boldsymbol{\Theta}(r^{(d-k)/d})$. Plugging this into Corollary \ref{cor:RotatedCylBase} we find that
\begin{align}\label{eq:24-06-19}
\mathbb{P}(F-\mathbb{E} F\geq r) \leq \exp(-\boldsymbol{\Theta}(r^{1-k/d})),
\end{align}
as $r\to\infty$. This bound clearly reflects the dependence on the dimension parameter $k$ and also shows that the bound becomes weaker the bigger $k$ is chosen.

\section{Concentration inequalities for intrinsic volumes}\label{sec:IntVol}

The purpose of this section is to prove a concentration inequality for the intrinsic volumes associated with the union set $Z$ of a stationary and isotropic Poisson process of $k$-cylinders in $\RR^d$. For this we assume in this section that the typical cylinder base $\Xi$ is convex $\PP$-almost surely and also that the base-direction distribution $\QQ$ is rotation invariant. In view of Lemma \ref{lem:isotropy} and the following discussion, this implies that $Z$ is a stationary and isotropic random closed set. We also assume that the window $W$ is convex.

\subsection{Mean value formulas}

The proof of our tail bounds relies on the general concentration inequalities from Section \ref{sec:GeneralConcentration} as well as on a mean value formula for the intrinsic volumes of $Z\cap W$. While such formulas are well known for the Boolean model (see, e.g., \cite[Theorem 9.1.3]{SW}), we were not able to locate a corresponding result for the union set of Poisson cylinder processes in the existing literature (for the closest results in this direction we refer to \cite[Section 5]{Hoffmann} and \cite[Section 7]{Weil90}). The purpose of this section is to provide such formulas under the assumption that $\QQ$ is rotation invariant. In particular, this assumption allows us to use the principal kinematic formula for cylinders from \cite[Chapter 6.3]{SW}.

\begin{proposition}\label{lm:IntrVolumesFraction}
Let $W\subset\RR^d$ be convex body with $V_d(W)>0$ and let $0\leq j=:j_0\leq d$ be some integer. Suppose that $\Xi$ is convex $\PP$-almost surely and that $\QQ$ is rotation invariant. Assume further that $m_i:=\EE V_i(\Xi)<\infty$ for $j-k\leq i\leq d-k$. Then
$$
\EE V_{j}(Z\cap W) = \sum\limits_{\ell=1}^{\infty}{(-1)^{\ell-1}\gamma^{\ell}\over \ell!}\sum\limits_{j_1=j_0}^{\min(d, d+j_0-k)}\cdots\!\!\!\sum\limits_{j_{\ell}=j_{\ell-1}}^{\min(d, d+j_{\ell-1}-k)}\!\!\!\!\!c_{j_0}^{j_{\ell}}V_{j_{\ell}}(W)\prod\limits_{i=1}^{\ell}c^{d+j_{i-1}-j_i}_{d}m_{d-k+j_{i-1}-j_i},
$$
where $c_{r}^{p}={p!\kappa_{p}\over r!\kappa_{r}}$. If additionally $j\ge k$, then
\begin{align*}
\EE V_{j}(Z\cap W) &= V_{j}(W)\left(1-e^{-\gamma\,m_{d-k}}\right)\\
&\qquad\qquad-e^{-\gamma\,m_{d-k}}\sum\limits_{m=1}^{d-j}c_{j}^{m+j}V_{m+j}(W)\sum\limits_{p=1}^m{(-1)^{p}\gamma^p\over p!}\sum\limits_{\substack{q_1,\ldots, q_p>0\\ q_1+\ldots+q_p=m}}\prod\limits_{i=1}^p c_{d}^{d-q_i}m_{d-k-q_i},
\end{align*}
where the empty sum is interpreted as zero.
\end{proposition}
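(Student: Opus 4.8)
The plan is to expand the random closed set $Z\cap W$ by inclusion--exclusion over the (finitely many, almost surely) cylinders that meet $W$, and to combine this with the principal kinematic formula for cylinders from \cite[Chapter 6.3]{SW}. Concretely, if $X_1,\dots,X_N$ denote the cylinders $Z(x,\theta,K)$ of $\widetilde\xi$ that intersect $W$, then additivity of $V_j$ on the convex ring gives
\[
V_j(Z\cap W)=\sum_{\ell=1}^{N}(-1)^{\ell-1}\sum_{1\le i_1<\dots<i_\ell\le N}V_j(W\cap X_{i_1}\cap\dots\cap X_{i_\ell}).
\]
Taking expectations and applying the multivariate Mecke equation for the Poisson process $\xi$ turns the sum over $\ell$-tuples into $\tfrac{\gamma^\ell}{\ell!}$ times an $\ell$-fold integral over $(\RR^{d-k}\times\MM_{d,k})^\ell$ of $\EE V_j\big(W\cap Z(x_1,\theta_1,K_1)\cap\dots\cap Z(x_\ell,\theta_\ell,K_\ell)\big)$. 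Because $\QQ$ is rotation invariant, the directions $\theta_i$ are Haar-distributed on $\SS\OO_{d,k}$ and independent of the bases $K_i$, so each translation/rotation integral can be evaluated by the iterated principal kinematic formula for cylinders, which expresses $\int V_j(W\cap \text{(moved cylinder)})$ as a linear combination $\sum_{j'}c_{j}^{j'}V_{j'}(W)\,V_{d+j-j'-k+\dots}(K)$-type terms with the combinatorial constants $c_r^p=p!\kappa_p/(r!\kappa_r)$. Iterating this $\ell$ times and replacing each $V_{d-k+j_{i-1}-j_i}(K_i)$ by its mean $m_{d-k+j_{i-1}-j_i}$ yields precisely the stated nested sum, with the index ranges $j_0\le j_1\le\dots\le j_\ell$ and the upper truncations $\min(d,d+j_{i-1}-k)$ coming from the constraint that the intrinsic-volume index of a cylinder base cannot exceed $d-k$ (and cannot be negative). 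The moment hypotheses $m_i<\infty$ for $j-k\le i\le d-k$ guarantee all the integrals and the interchange of sum and expectation are justified; one should insert a brief dominated-convergence / absolute-convergence remark here, comparing with the bound $V_j(W\cap X)\le V_j(W)$ and the local finiteness of $\lambda_{d-k}\otimes\QQ$.

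For the second formula, which holds under the additional assumption $j\ge k$, the idea is to resum the series. When $j\ge k$ the cylinder direction becomes irrelevant for the leading behaviour: a $k$-cylinder intersected with a $j$-dimensional affine flat is again (generically) full in the cylinder's linear direction, so the kinematic constants simplify and one can collect terms. The clean way is to write the series in the first formula as $V_j(W)$ times a geometric-type factor plus correction terms indexed by the "defect'' $m=j_\ell-j_0$, sum the geometric part to $1-e^{-\gamma m_{d-k}}$ (this is exactly the volume-fraction computation, cf. \eqref{eq:ExpectationF}), and organize the remaining terms by the compositions $q_1+\dots+q_p=m$ with $q_i>0$, each $q_i$ recording the drop $j_i-j_{i-1}$ at step $i$ and contributing a factor $c_d^{d-q_i}m_{d-k-q_i}$, while the overall sum over the number $p\le m$ of nonzero drops produces $\sum_{p\ge1}(-1)^p\gamma^p/p!$ and the prefactor $e^{-\gamma m_{d-k}}$ is pulled out from the zero-drop steps. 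A short combinatorial/generating-function argument (or an induction on $m$) then shows these two rearrangements of the same double series agree; I would present it as: group the $\ell$-fold sum according to which of the $\ell$ steps have $j_i=j_{i-1}$ versus $j_i>j_{i-1}$, sum the former geometrically, and match coefficients.

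The main obstacle is the bookkeeping in going from the first (all-orders, direction-dependent) formula to the second (resummed) one: one has to be careful that the iterated principal kinematic formula for cylinders really does produce the constants $c_{j_0}^{j_\ell}$ and $\prod_i c_d^{d+j_{i-1}-j_i}m_{d-k+j_{i-1}-j_i}$ with exactly the claimed index ranges, and that the truncation $\min(d,d+j_{i-1}-k)$ is forced rather than merely an artifact. This requires writing the kinematic formula for cylinders in the normalization of \cite[Chapter 6.3]{SW}, checking the single-step identity
\[
\int\nu_{d,k}(\dint\theta)\int_{\RR^{d-k}}V_{j'}\big(W'\cap Z(x,\theta,K)\big)\,\lambda_{d-k}(\dint x)=\sum_{j''\ge j'}c_{j'}^{\,j''}\,c_d^{\,d+j'-j''}\,V_{j''}(W')\,V_{d-k+j'-j''}(K),
\]
for a convex body $W'$, and then iterating. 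The convergence/interchange justification and the equality of the two combinatorial sums are routine once the single-step formula is pinned down, so the bulk of the write-up is verifying that one constant-laden identity and then turning the crank $\ell$ times.
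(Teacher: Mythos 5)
Your proposal follows essentially the same route as the paper's proof: inclusion–exclusion plus the multivariate Mecke equation, an iterated application of the principal kinematic formula for cylinders from \cite[Corollary 6.3.1]{SW} (the single-step identity you display is exactly the one used), and then a reindexing by the increments $q_i=j_i-j_{i-1}$ with separate treatment of the zero and nonzero increments to produce the exponential factor and the composition sum. The only cosmetic difference is that the paper, instead of averaging the cylinder direction over $\nu_{d,k}$ as you write, uses the rotation invariance of $\QQ$ to insert auxiliary $\nu_d$-integrals over $\varrho_i\in\SS\OO_d$ acting on each cylinder, which matches the normalization in which \cite[Corollary 6.3.1]{SW} is stated; the two are equivalent, and the rest of your outline (truncation $\min(d,d+j_{i-1}-k)$, matching the combinatorial constants, the $\ell=r+p$ split) coincides with the paper's argument.
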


\begin{remark}\rm 
We emphasize that for $j=d$ and $j=d-1$ the formula in Proposition \ref{lm:IntrVolumesFraction} can considerably be simplified. In fact, we have that
$$
\EE V_d(Z\cap W) = V_d(W)(1-e^{-\gamma m_{d-k}}),
$$
see \eqref{eq:ExpectationF}, and
$$
\EE V_{d-1}(Z\cap W) = \gamma\,V_d(W)\,m_{d-k-1}\,e^{-\gamma\,m_{d-k}}+V_{d-1}(W)(1-e^{-\gamma\,m_{d-k}}).
$$
\end{remark}

\begin{proof}[Proof of Proposition \ref{lm:IntrVolumesFraction}]
By definition of $Z$, the inclusion-exclusion principle and the multivariate Mecke formula (see \cite[Theorem 4.4]{LP}) we have that
\begin{align*}
\EE V_{j}(Z\cap W) &= \EE V_{j}\bigg(\bigcup_{(x,\theta,K)\in\xi}Z(x,\theta,K)\cap W\bigg)\\
&=\sum_{\ell=1}^\infty{(-1)^{\ell-1}\over\ell!}\,\gamma^\ell\int\limits_{\MM_{d,k}^\ell}\int\limits_{(\RR^{d-k})^\ell}V_{j}(Z(x_1,\theta_1,K_1)\cap\ldots\cap Z(x_\ell,\theta_\ell,K_\ell)\cap W)\\
&\qquad\qquad\qquad\qquad\qquad\qquad\times\lambda_{d-k}^\ell(\dint(x_1,\ldots,x_\ell))\,\QQ^\ell(\dint((\theta_1,K_1),\ldots,(\theta_\ell,K_\ell))).
\end{align*}
To evaluate the $\ell$-fold integral over $\RR^{d-k}$ we make use of the following principal kinematic formula for cylinders, which can be found in \cite[Corollary 6.3.1]{SW}. Namely, for fixed $(\theta,K)\in\MM_{d,k}$ one has that
\begin{align*}
\int\limits_{\SS\OO_d}\int\limits_{\RR^{d-k}}V_{j}(\varrho Z(x,\theta,K)\cap W)\,\lambda_{d-k}(\dint x)\nu_d(\dint \varrho) = \sum\limits_{p=j}^{\min(d, d+j-k)}c_{j}^{p}c_{d}^{d-p+j}V_{p}(W)V_{d-k+j-p}(K),
\end{align*}
where $\nu_d$ is the unique rotationally invariant Haar probability measure on $\SS\OO_d$.
A recursive application of this integral formula and Fubini's theorem yields that, for fixed $\ell\in\NN$ and $(\theta_1,K_1),\ldots,(\theta_\ell,K_\ell)\in\MM_{d,k}$,
\begin{align*}
&\int\limits_{(\SS\OO_{d})^\ell}\int\limits_{(\RR^{d-k})^\ell} V_{j}(\varrho_1 Z(x_1,\theta_1,K_1)\cap\ldots \cap \varrho_{\ell} Z(x_{\ell},\theta_{\ell},K_{\ell})\cap W)\\
&\hspace{5cm}\times\lambda_{d-k}^\ell(\dint(x_1,\ldots,x_\ell))\,\nu_{d}^\ell(\dint(\varrho_1,\ldots,\varrho_\ell))\\
&\qquad= \sum\limits_{j_1=j_0}^{\min(d, d+j_0-k)}\cdots\sum\limits_{j_{\ell}=j_{\ell-1}}^{\min(d, d+j_{\ell-1}-k)}c_{j_0}^{j_{\ell}}V_{j_{\ell}}(W)\prod\limits_{i=1}^{\ell}c^{d+j_{i-1}-j_i}_{d}V_{d-k+j_{i-1}-j_i}(K_i),
\end{align*}
where we recall that $j_0=j$.
Thus, from the assumed rotational invariance of $\QQ$ and Fubini's theorem we conclude
\begin{align*}
    &\EE V_{j}(Z\cap W)\\
    &\qquad=\sum_{\ell=1}^\infty{(-1)^{\ell-1}\over\ell!}\,\gamma^\ell\int\limits_{\MM_{d,k}^\ell}\int\limits_{(\SS\OO_{d})^\ell}\int\limits_{(\RR^{d-k})^\ell}V_{j}(\varrho_1Z(x_1,\theta_1,K_1)\cap\ldots\cap \varrho_\ell Z(x_\ell,\theta_\ell,K_\ell)\cap W)\\
    &\qquad\qquad\qquad\qquad\times\lambda_{d-k}^\ell(\dint(x_1,\ldots,x_\ell))\,\nu_{d}^\ell(\dint(\varrho_1,\ldots,\varrho_\ell))\,\QQ^\ell(\dint((\theta_1,K_1),\ldots,(\theta_\ell,K_\ell)))\\
    &\qquad=\sum_{\ell=1}^\infty{(-1)^{\ell-1}\over\ell!}\,\gamma^\ell\sum\limits_{j_1=j_0}^{\min(d, d+j_0-k)}\cdots\sum\limits_{j_{\ell}=j_{\ell-1}}^{\min(d, d+j_{\ell-1}-k)}c_{j_0}^{j_{\ell}}V_{j_{\ell}}(W)\prod\limits_{i=1}^{\ell}c^{d+j_{i-1}-j_i}_{d}m_{d-k+j_{i-1}-j_i}.
\end{align*}
This proves the first claim.

If $j\ge k$, the above formula can be simplified further. To this end, let us introduce the notation $q_i:=j_i-j_{i-1}$. Then  $\sum\limits_{i=1}^{\ell}q_i=j_\ell-j_0$ and we obtain that
\begin{align*}
&\EE V_{j}(Z\cap W)\\
&=\sum_{\ell=1}^\infty{(-1)^{\ell-1}\over\ell!}\,\gamma^\ell\sum\limits_{j_1=j_0}^{d}\cdots\sum\limits_{j_{\ell}=j_{\ell-1}}^{d}c_{j_0}^{j_{\ell}}V_{j_{\ell}}(W)\prod\limits_{i=1}^{\ell}c^{d+j_{i-1}-j_i}_{d}m_{d-k+j_{i-1}-j_i}\displaybreak\\
&=\sum_{\ell=1}^\infty{(-1)^{\ell-1}\over\ell!}\,\gamma^\ell\sum\limits_{q_1=0}^{d-j_0}\sum\limits_{q_2=0}^{d-j_0-q_1}\cdots\sum\limits_{q_\ell=0}^{d-j_0-q_1-\ldots-q_{\ell-1}}c_{j_0}^{q_1+\ldots+q_\ell +j_0}V_{q_1+\ldots+q_\ell+j_0}(W)\prod\limits_{i=1}^{\ell}c^{d-q_i}_{d}m_{d-k-q_i}\\
&=V_j(W)(1-e^{-\gamma m_{d-k}})+\sum\limits_{m=1}^{d-j}c_{j}^{m+j}V_{m+j}(W)\,S_m
\end{align*}
with
$$
S_m:=\sum_{\ell=1}^\infty{(-1)^{\ell-1}\over\ell!}\,\gamma^\ell\sum\limits_{\substack{q_1,\ldots,q_{\ell}\ge 0 \\ q_1+\ldots+q_{\ell}=m}}\prod\limits_{i=1}^{\ell}c^{d-q_i}_{d}m_{d-k-q_i}.
$$
Assume further that $\ell=r+p$, $1\leq p\leq m$, $r\in\{0,1,2,\ldots\}$ and $q_1,\ldots, q_p\in\mathbb{N}$, $q_{p+1}=\ldots=q_\ell=0$. Then the infinite sum $S$ can be evaluated explicitly. Indeed, we have that
\begin{align*}
S_m&=\sum_{\ell=1}^\infty{(-1)^{\ell-1}\over\ell!}\,\gamma^\ell\sum\limits_{\substack{q_1,\ldots,q_{\ell}\ge 0 \\ q_1+\ldots+q_{\ell}=m}}\prod\limits_{i=1}^{\ell}c^{d-q_i}_{d}m_{d-k-q_i}\\
&=\sum\limits_{p=1}^m\sum\limits_{r=0}^{\infty}{(-1)^{r+p-1}\gamma^{r+p}\over (r+p)!}\binom{r+p}{r}m_{d-k}^r\sum\limits_{\substack{q_1,\ldots,q_{p}> 0 \\ q_1+\ldots+q_{p}=m}}\prod\limits_{i=1}^{p}c^{d-q_i}_{d}m_{d-k-q_i}\\
&=-e^{-\gamma m_{d-k}}\sum\limits_{p=1}^m{(-1)^{p}\gamma^{p}\over p!}\sum\limits_{\substack{q_1,\ldots,q_{p}> 0 \\ q_1+\ldots+q_{p}=m}}\prod\limits_{i=1}^{p}c^{d-q_i}_{d}m_{d-k-q_i}
\end{align*}
and the proof is complete.
\end{proof}

\subsection{Concentration inequality}

For fixed $j\in\{0,1,\ldots,d\}$ we consider the Poisson functional
$$
F_j := V_j(Z\cap W).
$$
We start by dealing with the first-order difference operator of $F_j$. Due to the additivity of the intrinsic volumes, for $(\lambda_{d-k}\otimes\QQ)$-almost all $(x,\theta,K)\in\RR^{d-k}\times\MM_{d,k}$ we have that
\begin{align*}
D_{(x,\theta,K)}F_j &= V_{j}\left((Z\cup Z(x,\theta,K))\cap W\right) - V_{j}(Z\cap W)\\
&=V_{j}\left( Z(x,\theta,K)\cap W\right) - V_{j}(Z\cap Z(x,\theta,K)\cap W)
\end{align*}
holds $\PP$-almost surely.

In order to derive a bound for the upper tail and the lower tail of the functional $F_j$ we will apply the technique already used in Section \ref{SecVolume} for the case of the volume. For this we need to make sure that the conditions of Lemma \ref{lem:GieringerLast} hold for $F_j$.

\begin{lemma}\label{lm:AssumptionsIntrVolume}
For any $j\in\{0,1,\ldots,d\}$ we have that $F_j\in L^2(\PP)$, $DF_j\in L^2(\PP\otimes \lambda_{d-k}\otimes\QQ)$ and $s_{F_j}=s_{F_j}^{({\rm lt})}=\infty$.
\end{lemma}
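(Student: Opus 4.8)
The statement is the exact analogue of Lemma~\ref{lem:AssumptionsVolume}, and I would carry it out in parallel. The key point is that, unlike the volume, the intrinsic volumes $V_j$ are not monotone under unions, so one cannot bound $D_{(x,\theta,K)}F_j$ simply by $V_j(Z(x,\theta,K)\cap W)$. The remedy is to observe that, since $Z(x,\theta,K)\cap W$ is the intersection of the (convex) cylinder $Z(x,\theta,K)$ with the convex body $W$, it is itself a convex body, and by the monotonicity of intrinsic volumes on convex bodies together with the fact that $W$ is convex we get the \emph{deterministic} bound $|D_{(x,\theta,K)}F_j|\leq V_j(Z(x,\theta,K)\cap W)\leq V_j(W)$ whenever $Z(x,\theta,K)\cap W\neq\varnothing$. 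More precisely, $V_j\big(Z(x,\theta,K)\cap W\big)\le V_j(W)$ because $Z(x,\theta,K)\cap W\subseteq W$ and both are convex, and likewise $V_j\big(Z\cap Z(x,\theta,K)\cap W\big)\le V_j\big(Z(x,\theta,K)\cap W\big)$ is not needed in this crude form; what matters is that each of the two terms in $D_{(x,\theta,K)}F_j$ lies in $[0,V_j(W)]$, hence $|D_{(x,\theta,K)}F_j|\le V_j(W)$.

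\textbf{First step: $F_j\in L^2(\PP)$ and $DF_j\in L^2(\PP\otimes\lambda_{d-k}\otimes\QQ)$.} I would invoke the Poincar\'e-type inequality, Corollary~18.8 in \cite{LP}, exactly as in Lemma~\ref{lem:AssumptionsVolume}: it gives
$$
\EE(F_j^2)\le(\EE F_j)^2+\gamma\int_{\RR^{d-k}}\int_{\MM_{d,k}}\EE\big[(D_{(x,\theta,K)}F_j)^2\big]\,\QQ(\dint(\theta,K))\,\lambda_{d-k}(\dint x).
$$
By the deterministic bound above, $(D_{(x,\theta,K)}F_j)^2\le V_j(W)^2\,{\bf 1}\{Z(x,\theta,K)\cap W\neq\varnothing\}$, and $\EE F_j\le V_j(W)$ for the same reason (or by Proposition~\ref{lm:IntrVolumesFraction}). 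Since $\lambda_{d-k}\otimes\QQ$ assigns finite mass to $\{(x,\theta,K):Z(x,\theta,K)\cap W\neq\varnothing\}$ — this is the defining local-finiteness property already used in Lemma~\ref{lem:AssumptionsVolume}, which follows from \eqref{eq:ConditionRACS} and compactness of $W$ — the right-hand side is finite. The same integrand $V_j(W)^2\,{\bf 1}\{\cdots\}$ shows $DF_j\in L^2(\PP\otimes\lambda_{d-k}\otimes\QQ)$.

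\textbf{Second step: $s_{F_j}=\infty$ and $s_{F_j}^{({\rm lt})}=\infty$.} Here I would follow the template of Lemma~\ref{lem:AssumptionsVolume} line by line. Fix $s\ge 0$. Since $0\le F_j\le V_j(W)$ almost surely, $e^{sF_j}$ is bounded and hence in $L^2(\PP)$, and likewise $e^{-sF_j}\le 1$ is in $L^2(\PP)$. For the derivative, write $D_{(x,\theta,K)}e^{\pm sF_j}=e^{\pm sF_j}\big(e^{\pm sD_{(x,\theta,K)}F_j}-1\big)$; the factor $e^{\pm sF_j}$ is bounded by $e^{sV_j(W)}$ (resp.\ by $1$), and $|e^{\pm sD_{(x,\theta,K)}F_j}-1|\le e^{sV_j(W)}$ on the event $\{Z(x,\theta,K)\cap W\neq\varnothing\}$ and vanishes off it. Integrating the square against $\PP\otimes\Lambda$ and using once more the finiteness of $\Lambda(\{Z(x,\theta,K)\cap W\neq\varnothing\})$ shows $De^{\pm sF_j}\in L^2(\PP\otimes\lambda_{d-k}\otimes\QQ)$ for every $s\ge 0$; the auxiliary integrability of $e^{2sF_j}$ against $\Lambda$ restricted to the hitting event is immediate from boundedness of $F_j$. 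Hence $s_{F_j}=\infty$ and $s_{F_j}^{({\rm lt})}=s_{-F_j}=\infty$.

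\textbf{Main obstacle.} The only genuinely new ingredient compared with the volume case is the replacement of the trivial monotonicity $\lambda_d(A\cap W)\le\lambda_d(W)$ by the convex-geometric fact that $Z(x,\theta,K)\cap W$ is a convex body whenever nonempty and that $V_j$ is monotone on convex bodies, which yields the uniform bound $|D_{(x,\theta,K)}F_j|\le V_j(W)$. Once this observation is in place every estimate is a verbatim copy of the corresponding estimate in the proof of Lemma~\ref{lem:AssumptionsVolume}, so I do not expect any further difficulty.
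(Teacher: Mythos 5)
Your proposal matches the paper's proof: both rest on the single new observation that intrinsic volumes are non-negative and monotone under inclusion on convex bodies, yielding the deterministic bound $|D_{(x,\theta,K)}F_j|\leq V_j(Z(x,\theta,K)\cap W)\leq V_j(W)$ on $\{Z(x,\theta,K)\cap W\neq\varnothing\}$, after which everything reduces verbatim to the argument of Lemma~\ref{lem:AssumptionsVolume}. The paper simply states the bound and declares the rest analogous; you spell out the same steps in slightly more detail.
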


\begin{proof}
Since the intrinsic volumes $V_j$ are non-negative and monotone under set inclusion on the family of convex bodies we have that $\PP$-almost surely 
\[
D_{(x,\theta,K)}F_j\leq V_{j}\left( Z(x,\theta,K)\cap W\right)\leq V_{j}\left(W\right)
\]
for all $(x,\theta,K)\in\RR^{d-k}\times\MM_{d,k}$. The rest of the proof is now analogous to the proof of Lemma \ref{lem:AssumptionsVolume}.
\end{proof}

\begin{theorem}\label{thm:ConcentrationInequalityIntrVol}
Let $W\subset\RR^d$ be a convex body with $V_d(W)>0$, $\Xi$ be convex $\PP$-almost surely and assume that $\QQ$ is rotation invariant. Also, suppose that $j\ge k$ and $m_i\in(0,\infty)$ for all $j-k\leq i\leq d-k$. Then, for all $r\geq 0$, one has that
\begin{align*}
\PP(F_j-\EE F_j\geq r) &\leq \exp\Big(\inf_{s\geq 0}\Big(\EE\Big[V_{d-k}(P_{d-k}(\Theta^TW)+\Xi^*)\Psi\Big(s\sum\limits_{i=j-k}^{\min\{d-k,j\}}\diam(W)^{j-i}\textstyle{k\choose j-i}V_i(\Xi)\Big)\\
     &\qquad\qquad\qquad\times\sum\limits_{m=0}^{d-j}\beta_m\Big(\sum\limits_{i=j-k}^{\min\{d-k,j\}}\diam(W)^{j-i}\textstyle{k\choose j-i}V_i(\Xi)\Big)^{m/j}\Big]-rs\Big)\Big),
\end{align*}
and for $0\leq r\leq\EE F_j$ one has that
\begin{align*}
\PP(F_j-\EE F_j\leq -r) &\leq  \exp\Big(\inf_{s\geq 0}\Big(\EE\Big[V_{d-k}(P_{d-k}(\Theta^TW)+\Xi^*)\Psi\Big(-s\sum\limits_{i=j-k}^{\min\{d-k,j\}}\diam(W)^{j-i}\textstyle{k\choose j-i}V_i(\Xi)\Big)\\
     &\qquad\qquad\qquad\times\sum\limits_{m=0}^{d-j}\beta_m\Big(\sum\limits_{i=j-k}^{\min\{d-k,j\}}\diam(W)^{j-i}\textstyle{k\choose j-i}V_i(\Xi)\Big)^{m/j}\Big]-rs\Big)\Big),
\end{align*}
where $\Psi(x)=e^x-x-1$, $x\in\RR$, and
\begin{align*}
    \beta_0&:={1-e^{-\gamma m_{d-k}}\over m_{d-k}},\qquad \beta_1=0,\\
    \beta_m&:={\kappa_{d-j}^{1+m/j}{d\choose j+m}c_{j}^{m+j}\over \kappa_d^{m/j}\kappa_{d-j-m}{d\choose j}^{1+m/j}}\,\sum\limits_{p=1}^{\lfloor {m\over 2}\rfloor}m_{d-k}^{-2p-1}\Big(1-e^{-\gamma m_{d-k}}\sum\limits_{i=0}^{2p}{(\gamma m_{d-k})^{2p-i} \over (2p-i)!}\Big)\\
    &\hspace{8cm}\times\sum\limits_{\substack{q_1,\ldots, q_{2p}>0\\ q_1+\ldots+q_{2p}=m}}\prod\limits_{i=1}^{2p} c_{d}^{d-q_i}m_{d-k-q_i}
\end{align*}
for $m\in\{2,\ldots,d-j\}$. 
\end{theorem}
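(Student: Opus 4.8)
The plan is to re-run, for $F_j=V_j(Z\cap W)$, the argument used for the volume in the proof of Theorem~\ref{thm:GeneralConcentrationInequ}: feed Lemma~\ref{lem:GieringerLast} (resp.\ Lemma~\ref{lem:GieringerLastLOWERTAIL}) with a $\PP$-a.s.\ bound $V_{F_j}(s)\le v(s)$ (resp.\ $V_{F_j}^{(\mathrm{lt})}(s)\le v^{(\mathrm{lt})}(s)$) whose primitive $\int_0^s v(u)\,\dint u$ is exactly the expectation in the exponent of the claimed inequalities; the standing hypotheses of these lemmas are supplied by Lemma~\ref{lm:AssumptionsIntrVolume}. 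Throughout set $B=B(x,\theta,K):=Z(x,\theta,K)\cap W$, a convex body, so that $D_{(x,\theta,K)}F_j=V_j(B)-V_j(Z\cap B)$, and abbreviate $\psi(\Xi):=\sum_{i=j-k}^{\min\{d-k,j\}}\diam(W)^{\,j-i}{k\choose j-i}V_i(\Xi)$, the quantity that sits inside $\Psi$ in the statement. I would prepare two geometric inputs. First, since $\theta^T$ is an isometry and $\theta^T B\subseteq (K+x)\times C$ with $C$ the closed projection of $\theta^TW$ onto $E_k$ --- a convex body of diameter $\le\diam(W)$, hence, having all coordinate spans $\le\diam(W)$, contained up to translation in $[0,\diam(W)]^k$ --- the product formula $V_i(A_1\times A_2)=\sum_{a+b=i}V_a(A_1)V_b(A_2)$ together with $V_b([0,\diam(W)]^k)={k\choose b}\diam(W)^b$ gives $V_i(B)\le\sum_{\ell}{k\choose i-\ell}\diam(W)^{\,i-\ell}V_\ell(K)$ for every $i$; in particular $V_j(B)\le\psi(\Xi)$. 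Second, the Alexandrov--Fenchel inequalities yield, for every convex body $B$ and every $m\ge1$, the isoperimetric-type bound $V_{m+j}(B)\le c_{m,j}\,V_j(B)^{(m+j)/j}$, with equality for Euclidean balls, where $c_{m,j}=\frac{\kappa_{d-j}^{1+m/j}{d\choose j+m}}{\kappa_d^{m/j}\kappa_{d-j-m}{d\choose j}^{1+m/j}}$ is precisely the combinatorial prefactor occurring in $\beta_m$.

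The heart of the proof is the analysis of the inner thinning integral from \eqref{eq:DefVF}, $\int_\bN D_{(x,\theta,K)}f(\eta_t+\mu)\,\Pi_{(1-t)\Lambda}(\dint\mu)$ with $\Lambda=\gamma\,\lambda_{d-k}\otimes\QQ$. Conditionally on $\eta_t$, the marks $\mu\sim\Pi_{(1-t)\Lambda}$ produce an independent stationary isotropic Poisson cylinder union $Z'$ of intensity $(1-t)\gamma$ and base-direction law $\QQ$, and additivity of $V_j$ on the convex ring gives, writing $A:=Z(\eta_t)\cap B$,
\[
\int_\bN D_{(x,\theta,K)}f(\eta_t+\mu)\,\Pi_{(1-t)\Lambda}(\dint\mu)=\bigl(V_j(B)-\EE V_j(Z'\cap B)\bigr)-\bigl(V_j(A)-\EE V_j(Z'\cap A)\bigr).
\]
Controlling the $A$-bracket as in the volume case, one reduces to $V_j(B)-\EE V_j(Z'\cap B)$, and here Proposition~\ref{lm:IntrVolumesFraction}, applied with the convex body $B$ in place of $W$ and intensity $(1-t)\gamma$ --- legitimate because $j\ge k$ --- evaluates this to
\[
V_j(B)\,e^{-(1-t)\gamma m_{d-k}}+e^{-(1-t)\gamma m_{d-k}}\sum_{m=1}^{d-j}c_j^{m+j}V_{m+j}(B)\sum_{p\ge1}\frac{(-1)^p\bigl((1-t)\gamma\bigr)^p}{p!}\sum_{\substack{q_1,\dots,q_p>0\\ q_1+\dots+q_p=m}}\prod_{i=1}^{p}c_d^{d-q_i}m_{d-k-q_i}.
\]
Because $V_{m+j}(B)\ge0$ and every $q$-sum is nonnegative, discarding the nonpositive odd-order terms of the alternating inner sum --- keeping only $p=2,4,\dots$ --- only enlarges the bound; this is the mechanism behind $\beta_1=0$ and the appearance of exactly $2p$ factors in $\beta_m$.

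It then remains to assemble the pieces exactly as in the proof of Theorem~\ref{thm:GeneralConcentrationInequ}. Inserting the previous bound and the estimate $D_{(x,\theta,K)}F_j\le\psi(\Xi)$ into \eqref{eq:DefVF}, the iterated $t$-integration of the surviving terms produces $\int_0^1 e^{-(1-t)\gamma m_{d-k}}\,\dint t=\frac{1-e^{-\gamma m_{d-k}}}{\gamma m_{d-k}}$ (the $\beta_0$-factor) and $\int_0^1 e^{-(1-t)\gamma m_{d-k}}(1-t)^{p}\,\dint t=\frac{p!}{(\gamma m_{d-k})^{p+1}}\bigl(1-e^{-\gamma m_{d-k}}\sum_{\ell=0}^{p}\frac{(\gamma m_{d-k})^\ell}{\ell!}\bigr)$ for $p$ even (the regularized incomplete Gamma factors in $\beta_m$), the factor $\gamma$ from $\Lambda$ cancelling. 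Using then $V_j(B)\le\psi(\Xi)$ and $V_{m+j}(B)/c_{m,j}\le\psi(\Xi)^{(m+j)/j}=\psi(\Xi)\cdot\psi(\Xi)^{m/j}$ (the shift from exponent $(m+j)/j$ to $m/j$ being precisely the division by $\psi(\Xi)$ that comes from $\int_0^s(e^{u\psi(\Xi)}-1)\,\dint u=\Psi(s\psi(\Xi))/\psi(\Xi)$), and the identity \eqref{eq:IntegralIdentityCylidners} to carry out the $x$-integration, one arrives at $V_{F_j}(s)\le v(s)$ with
\[
v(s)=\EE\Bigl[V_{d-k}\bigl(P_{d-k}(\Theta^TW)+\Xi^*\bigr)\bigl(e^{s\psi(\Xi)}-1\bigr)\,\psi(\Xi)\sum_{m=0}^{d-j}\beta_m\,\psi(\Xi)^{m/j}\Bigr],
\]
so that $\int_0^s v(u)\,\dint u$ is the expectation in the exponent of the upper-tail inequality, and Lemma~\ref{lem:GieringerLast} closes it. The lower tail is handled identically after replacing $F_j$ by $-F_j$: now $\Psi$ is taken on $(-\infty,0]$, on which it is nonnegative and decreasing, the analogous bound for $V_{F_j}^{(\mathrm{lt})}$ follows verbatim, and Lemma~\ref{lem:GieringerLastLOWERTAIL} applies; the restriction $0\le r\le\EE F_j$ is only for definiteness, the bound being trivial for $r>\EE F_j$ since $F_j\ge0$.

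I expect the main obstacle to be the thinning step of the second paragraph. Unlike the Lebesgue measure, intrinsic volumes are neither monotone nor nonnegative on the convex ring, so the clean pointwise control of $D_{(x,\theta,K)}f(\eta_t+\mu)$ available for the volume is lost; one is forced to route the estimate through the mean value formula of Proposition~\ref{lm:IntrVolumesFraction} (which itself had to be developed here) and to argue carefully about the sign of the $A$-bracket and of the discarded odd-order terms in order to obtain a genuine $\PP$-a.s.\ upper bound for $V_{F_j}(s)$ and $V_{F_j}^{(\mathrm{lt})}(s)$. The remaining work --- matching the Alexandrov--Fenchel constant $c_{m,j}$ with the $\kappa$-quotient in $\beta_m$, and the iterated $t$-integral with the regularized incomplete Gamma expression --- is routine but lengthy bookkeeping.
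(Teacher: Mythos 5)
Your proposal reproduces the paper's own argument step for step: bound the thinning integral $T_t$ by $V_j(B)-\EE V_j(Z'\cap B)$ and evaluate the latter with Proposition~\ref{lm:IntrVolumesFraction} applied to the convex body $B$ at intensity $(1-t)\gamma$, integrate over $t$ to get the regularized incomplete-Gamma factors $I_p$ and discard the negative odd-$p$ terms (whence $\beta_1=0$ and the $2p$-fold $q$-sums), apply the isoperimetric inequality to replace $V_{m+j}(B)$ by $c_{m,j}V_j(B)^{(m+j)/j}$, dominate $V_j(B)$ by $\psi(\Xi)$ via the cylinder--cube product bound (the paper's use of \cite[Lemma 14.2.1]{SW}), carry out the $x$-integration via \eqref{eq:IntegralIdentityCylidners}, and finish with Lemmas~\ref{lem:GieringerLast}--\ref{lem:GieringerLastLOWERTAIL}. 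Your additivity split of $T_t$ into a $B$-bracket and an $A$-bracket makes explicit the one step the paper also states without justification --- the inequality $T_t\le V_j(B)-\EE V_j(Z'\cap B)$, equivalently nonnegativity of the $A$-bracket, which is not an instance of set-inclusion monotonicity because $Z(\eta_t)\cap B$ and $Z'\cap A$ live in the convex ring --- and you rightly flag it as the delicate point.
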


\begin{remark}\rm 
\begin{itemize}
    \item[(i)]
We specialize the result of Theorem \ref{thm:ConcentrationInequalityIntrVol} for $j=d$ and $j=d-1$, where the concentration inequality takes a more simple form. For simplicity, we restrict ourselves to the bound for the upper tail. If $j=d$ we obtain, for $r\geq 0$,
\begin{align*}
    \PP(F_d-\EE F_d\geq r) \leq \exp\Big(\inf_{s\geq 0}\Big({p\over m_{d-k}}\EE\Big[V_{d-k}(P_{d-k}(\Theta^TW)+\Xi^*)\Psi(s\diam(W)^kV_{d-k}(\Xi))\Big]-rs\Big)\Big),
\end{align*}
which is precisely the bound we derived in Section \ref{SecVolume} under more general conditions, since $V_{d-k}(K)=\lambda_{d-k}(K)$ for a convex body $K\subset\RR^{d-k}$. Moreover, choosing $j=d-1$ we obtain, again for $r\geq 0$,
\begin{align*}
    \PP(F_{d-1}-\EE F_{d-1}\geq r) &\leq \exp\Big(\inf_{s\geq 0}\Big({p\over m_{d-k}}\EE\Big[V_{d-k}(P_{d-k}(\Theta^TW)+\Xi^*)\\
    &\qquad\times\Psi\big(s\diam(W)^{k-1}[\diam(W)V_{d-k-1}(\Xi)+kV_{d-k}(\Xi)]\big)\Big]-rs\Big)\Big).
\end{align*}

\item[(ii)]
Taking $k=0$, which corresponds to the Boolean model, and $j=d-1$ we deduce that
$$
\PP(F_{d-1}-\EE F_{d-1}\geq r) \leq \exp\Big(\inf_{s\geq 0}\Big({p\over m_d}\EE\big[V_d(W+\Xi^*)\Psi(sV_{d-1}(\Xi))\big]-rs\Big)\Big),\qquad r\geq 0,
$$
which should be compared to the corresponding inequality \eqref{eq:ConcBMVolume} for $F_d$. Note that the reason behind this simple form is the fact that the constant $\beta_1$ in Theorem \ref{thm:ConcentrationInequalityIntrVol} is equal to zero. Since this is not the case for $\beta_m$ with $m \in \{2,\ldots,d-j\}$, the resulting inequalities become more involved. In fact, for $j\in\{0,1,\ldots,d-1\}$ we have that
\begin{align*}
    \PP(F_j-\EE F_j\geq r) \leq \exp\Big(\inf_{s\geq 0}\Big(\EE\Big[V_{d}(W+\Xi^*)\Psi(sV_{j}(\Xi))\sum_{m=0}^{d-j}\beta_m V_j(\Xi)^{m/j}\Big]-rs\Big)\Big),\qquad r\geq 0.
\end{align*}
\end{itemize}
\end{remark}

\begin{proof}[Proof of Theorem \ref{thm:ConcentrationInequalityIntrVol}]
As in the proof of Theorem \ref{thm:GeneralConcentrationInequ} we start by deriving an upper bound for the function $V_{F_j}(s)$ defined by \eqref{eq:DefVF}. Considering the term $T_t$, $t\in[0,1]$, and applying Proposition \ref{lm:IntrVolumesFraction} we have that, putting $\Lambda:=\gamma\,\lambda_{d-k}\otimes\QQ$, 
\begin{align*}
T_t &\leq V_{j}(Z(x,\theta,K)\cap W) - \int\limits_{{\bf N}}V_{j}(Z(\mu)\cap Z(x,\theta,K)\cap W)\,\Pi_{(1-t)\Lambda}(\dint\mu)\\
&=e^{-\gamma(1-t)\,m_{d-k}}\bigg(V_j(Z(x,\theta,K)\cap W)\\
&\qquad\qquad+\sum\limits_{m=1}^{d-j}c_{j}^{m+j}V_{m+j}(Z(x,\theta,K)\cap W)\sum\limits_{s=1}^m{(-\gamma(1-t))^{s}\over s!}\sum\limits_{\substack{q_1,\ldots, q_s>0\\ q_1+\ldots+q_s=m}}\prod\limits_{i=1}^s c_{d}^{d-q_i}m_{d-k-q_i}\bigg),
\end{align*}
where for $\mu\in\textbf{N}$, $Z(\mu)$ stands for the union set induced by $\mu$.
Hence,
\begin{align*}
\int\limits_0^1 T_t\,\dint t &= V_j(Z(x,\theta,K)\cap W)I_0+\sum\limits_{m=1}^{d-j}c_{j}^{m+j}V_{m+j}(Z(x,\theta,K)\cap W)\\
&\hspace{5cm}\times\sum\limits_{p=1}^m I_p\gamma^p\sum\limits_{\substack{q_1,\ldots, q_p>0\\ q_1+\ldots+q_p=m}}\prod\limits_{i=1}^p c_{d}^{d-q_i}m_{d-k-q_i},
\end{align*}
where
\begin{align*}
I_p:&={1\over p!}\int\limits_0^1 (t-1)^{p}e^{\gamma(t-1)\,m_{d-k}}\,\dint t={(-1)^p\over (\gamma m_{d-k})^{p+1}}\left(1-e^{-\gamma m_{d-k}}\sum\limits_{i=0}^p{(\gamma m_{d-k})^{p-i} \over (p-i)!}\right).
\end{align*}
Let us introduce the following additional notation in order to simplify our subsequent computations:
\begin{align*}
    \alpha_0&:={1-e^{-\gamma m_{d-k}}\over m_{d-k}},\qquad \alpha_1=0,\\
    \alpha_m&:=c_{j}^{m+j}\sum\limits_{p=1}^{\lfloor {m\over 2}\rfloor}m_{d-k}^{-2p-1}\left(1-e^{-\gamma m_{d-k}}\sum\limits_{i=0}^{2p}{(\gamma m_{d-k})^{2p-i} \over (2p-i)!}\right)\sum\limits_{\substack{q_1,\ldots, q_{2p}>0\\ q_1+\ldots+q_{2p}=m}}\prod\limits_{i=1}^{2p} c_{d}^{d-q_i}m_{d-k-q_i}
\end{align*}
for $m\geq 1$.
Using the notation above and applying Fubini's theorem and the fact that intrinsic volumes are non-negative functionals on the family of convex bodies we conclude that
\begin{align*}
V_F(s) \leq v(s),\qquad s\geq 0,
\end{align*}
with $v(s)$ given by
\begin{align*}
v(s)=\int\limits_{\MM_{d,k}}\int\limits_{\RR^{d-k}}\left(e^{sV_j(Z(x,\theta,K)\cap W)}-1\right)\sum\limits_{m=0}^{d-j}\alpha_mV_{m+j}(Z(x,\theta,K)\cap W)\,\lambda_{d-k}(\dint x)\,\QQ(\dint(\theta,K)).
\end{align*}
In the next step we investigate the integral
$$
w(s):=\int\limits_{0}^s v(u)\,\dint u.
$$
For that purpose we notice that in the definition of $w(s)$ we can multiply the integrand with the indicator function ${\bf 1}\{Z(x,\theta,K)\cap {\rm int}(W)\neq\varnothing\}$. In fact, $Z(x,\theta,K)\cap {\rm int}(W)\neq\varnothing$ is equivalent to $V_j(Z(x,\theta,K)\cap {\rm int}(W))>0$ and, additionally,
$$
\lambda_{d-k}(\{x\in\RR^{d-k}:Z(x,\theta,K)\cap W\neq\varnothing\text{ and }Z(x,\theta,K)\cap {\rm int}(W)=\varnothing\})=0
$$
holds by our convexity assumption on the cylinder bases $K$. We can thus write 
\begin{align*}
    w(s):&=\int\limits_{0}^s v(u)\,\dint u=\int\limits_{\MM_{d,k}}\int\limits_{\RR^{d-k}}\left [e^{sV_j(Z(x,\theta,K)\cap W)}-sV_j(Z(x,\theta,K)\cap W)-1\right]\\
    &\qquad\qquad\times \sum\limits_{m=0}^{d-j}\alpha_m{V_{m+j}(Z(x,\theta,K)\cap W)\over V_{j}(Z(x,\theta,K)\cap W)}\,{\bf 1}\{Z(x,\theta,K)\cap {\rm int}(W)\neq\varnothing\}\,\lambda_{d-k}(\dint x)\,\QQ(\dint(\theta,K))\\
    &=\int\limits_{\MM_{d,k}}\int\limits_{\RR^{d-k}}\Psi(sV_j(Z(x,\theta,K)\cap W))\sum\limits_{m=0}^{d-j}\alpha_m{V_{m+j}(Z(x,\theta,K)\cap W)\over V_{j}(Z(x,\theta,K)\cap W)}\\
    &\qquad\qquad\qquad\qquad\times{\bf 1}\{Z(x,\theta,K)\cap {\rm int}(W)\neq\varnothing\}\,\lambda_{d-k}(\dint x)\,\QQ(\dint(\theta,K)),
\end{align*}
where $\Psi(x)=e^x-x-1$, $x\in\RR$. Let us note here that the function $\Psi(x)$ is increasing for $x\ge 0$ and that all coefficients $\alpha_m$ are non-negative. Thus, the integrand is an increasing function in $V_{m+j}(Z(x,\theta,K)\cap W)$, $1\leq m\leq d-j$. From the isoperimetric inequalities for intrinsic volumes of convex bodies (see, e.g., \cite[Equation (14.31)]{SW}) we deduce that, for $(x,\theta,K)\in\MM_{d,k}$,
\begin{align*}
    V_{m+j}(Z(x,\theta,K)\cap W)\leq {\kappa_{d-j}^{1+m/j}{d\choose j+m}\over \kappa_d^{m/j}\kappa_{d-j-m}{d\choose j}^{1+m/j}}V_{j}(Z(x,\theta,K)\cap W)^{m/j+1}.
\end{align*}
Provided that $Z(x,\theta,K)\cap {\rm int}(W)\neq\varnothing$, this implies
\begin{equation}\label{eq:final}
\begin{split}
    w(s) &\leq\int\limits_{\MM_{d,k}}\int\limits_{\RR^{d-k}}\Psi(sV_j(Z(x,\theta,K)\cap W))\sum\limits_{m=0}^{d-j}\beta_mV_{j}(Z(x,\theta,K)\cap W)^{m/j}\,\lambda_{d-k}(\dint x)\,\QQ(\dint(\theta,K)),
\end{split}
\end{equation}
with the coefficients $\beta_0,\ldots,\beta_{d-j}$ given by
$$
\beta_m:={\alpha_m\kappa_{d-j}^{1+m/j}{d\choose j+m}\over \kappa_d^{m/j}\kappa_{d-j-m}{d\choose j}^{1+m/j}},\qquad m\in\{0,1,\ldots,d-j\}.
$$
Note that we can from now on omit the indicator function that $Z(x,\theta,K)\cap {\rm int}(W)\neq\varnothing$. Using again the fact that the window $W$ as well as the cylinder bases $K$ are convex and that the intrinsic volumes are monotone under set inclusion on the family of convex bodies we get
\[
V_j(Z(x,\theta,K)\cap W)\leq V_j(K+\diam(W)\,C_k),
\]
where $C_k\subset E_k$ denotes the $k$-dimensional unit cube. Applying now \cite[Lemma 14.2.1]{SW} we conclude that
\[
V_j(Z(x,\theta,K)\cap W)\leq \sum\limits_{i=j-k}^{\min\{d-k,j\}}\diam(W)^{j-i}{k\choose j-i}V_i(K).
\]
Substituting this into \eqref{eq:final} and proceeding as in the proof of Theorem \ref{thm:GeneralConcentrationInequ} we obtain
\begin{align*}
     w(s)&\leq \int\limits_{\MM_{d,k}}V_{d-k}(P_{d-k}(\theta^TW)+K^*)\,\Psi\left(s\sum\limits_{i=j-k}^{\min\{d-k,j\}}\diam(W)^{j-i}{k\choose j-i}V_i(K)\right)\\
     &\qquad\qquad\times\sum\limits_{m=0}^{d-j}\beta_m\left(\sum\limits_{i=j-k}^{\min\{d-k,j\}}\diam(W)^{j-i}{k\choose j-i}V_i(K)\right)^{m/j}\,\QQ(\dint(\theta,K)).
\end{align*}
This completes the proof for the upper tail, the proof for the lower tail is similar.
\end{proof}

\subsection{The special case of randomly rotated cylinder bases}

The result of Theorem \ref{thm:ConcentrationInequalityIntrVol} can be simplified further if we additionally assume that the cylinder bases are random rotations of a fixed convex body $M\subset\RR^{d-k}$ and the direction $\Theta$ of the typical cylinder base is uniformly distributed on $\SS\OO_{d,k}$ according to the unique rotationally invariant Haar probability measure $\nu_{d,k}$ on $\SS\OO_{d,k}$, independently of $U$ (recall Sections \ref{subsec:RotationDilation} and \ref{subsec:Rotation}). More explicitly, this means that $\Xi=UM$, where $U\in SO_{d-k}$ is a uniform random rotation in $\RR^{d-k}$, and that $\Xi$ and $\Theta$ are independent.

\begin{corollary}
Under the assumptions just described we have that, for all integers $k\leq j\leq d$,
\begin{align*}
\PP(F_j&-\EE F_j\geq r) \leq \exp\left({r\over\alpha}-\left(\beta+{r\over\alpha}\right)\log\left(1+{r\over \alpha\beta}\right) \right),\qquad r\geq 0,
\end{align*}
and
\begin{align*}
\PP(F_j&-\EE F_j\leq -r) \leq \exp\left(-{r\over\alpha}-\left(\beta-{r\over\alpha}\right)\log\left(1-{r\over \alpha\beta}\right) \right),\qquad 0\leq r\leq\EE F_j,
\end{align*}
where $\alpha=\sum\limits_{i=j-k}^{\min\{d-k,j\}}\diam(W)^{j-i}\textstyle{k\choose j-i}V_i(M)$ and $\beta = \sum\limits_{i=0}^{d-k}{\kappa_i\kappa_{d-i}\over{d\choose i}\kappa_d}V_i(W)V_{d-k-i}(M)\sum\limits_{m=0}^{d-j}\beta_m\alpha^{m/j}$, with $\beta_m$ defined as in Theorem \ref{thm:ConcentrationInequalityIntrVol}.
\end{corollary}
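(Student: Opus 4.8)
The plan is to derive this from Theorem~\ref{thm:ConcentrationInequalityIntrVol}, of which it is essentially a specialisation, by combining a deterministic simplification of the exponent (available because $\Xi$ is a \emph{rotated}, not dilated, copy of the fixed body $M$) with the integral-geometric computation already carried out in the proof of Corollary~\ref{cor:DilatedRotatedCylBase}. Observe first that the moment hypotheses of Theorem~\ref{thm:ConcentrationInequalityIntrVol} are automatic here, since $m_i=\EE V_i(\Xi)=V_i(M)\in(0,\infty)$ for all $0\le i\le d-k$, $M$ being a convex body in $\RR^{d-k}$.

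First I would use rotation invariance of the intrinsic volumes to write $V_i(\Xi)=V_i(UM)=V_i(M)$ for every $i$. Then the argument $\sum_{i=j-k}^{\min\{d-k,j\}}\diam(W)^{j-i}{k\choose j-i}V_i(\Xi)$ inside $\Psi$ in Theorem~\ref{thm:ConcentrationInequalityIntrVol} equals the deterministic constant $\alpha$ of the statement, and likewise $\sum_{m=0}^{d-j}\beta_m\big(\sum_{i}\diam(W)^{j-i}{k\choose j-i}V_i(\Xi)\big)^{m/j}=\sum_{m=0}^{d-j}\beta_m\alpha^{m/j}$ is deterministic. Pulling both factors out of the expectation collapses the bound of Theorem~\ref{thm:ConcentrationInequalityIntrVol} to
\begin{align*}
\PP(F_j-\EE F_j\geq r)\leq\exp\Big(\inf_{s\geq0}\Big(\Psi(s\alpha)\Big(\sum_{m=0}^{d-j}\beta_m\alpha^{m/j}\Big)\,\EE\big[V_{d-k}(P_{d-k}(\Theta^TW)+\Xi^*)\big]-rs\Big)\Big),
\end{align*}
and analogously for the lower tail with $\Psi(-s\alpha)$.

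Next I would evaluate $\EE[V_{d-k}(P_{d-k}(\Theta^TW)+\Xi^*)]$ in exactly the way the corresponding quantity is handled in the proof of Corollary~\ref{cor:DilatedRotatedCylBase}, specialised to $R\equiv1$: conditioning on $\Theta$ and integrating over the rotation $U$ against $\nu_{d-k}$ via the rotational integral formula \cite[Theorem 6.1.1]{SW} gives $\sum_{i=0}^{d-k}{\kappa_{d-k-i}\kappa_i\over{d-k\choose i}\kappa_{d-k}}V_i(P_{d-k}(\Theta^TW))V_{d-k-i}(M)$, and then integrating over $\Theta$ against $\nu_{d,k}$, identifying $\SS\OO_{d,k}$ with $G(d,d-k)$ and invoking the mean projection formula \cite[Theorem 6.2.2]{SW} replaces $\int V_i(P_{d-k}(\theta^TW))\,\nu_{d,k}(\dint\theta)$ by ${{d-i\choose k}\kappa_{d-i}\kappa_{d-k}\over{d\choose k}\kappa_{d-k-i}\kappa_d}V_i(W)$. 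After the same cancellation of binomial coefficients and $\kappa$-factors as in Corollary~\ref{cor:DilatedRotatedCylBase} this yields
\begin{align*}
\EE\big[V_{d-k}(P_{d-k}(\Theta^TW)+\Xi^*)\big]=\sum_{i=0}^{d-k}{\kappa_i\kappa_{d-i}\over{d\choose i}\kappa_d}V_i(W)V_{d-k-i}(M),
\end{align*}
whose product with $\sum_{m=0}^{d-j}\beta_m\alpha^{m/j}$ is precisely the constant $\beta$. Both tail bounds therefore reduce to $\PP(F_j-\EE F_j\geq r)\leq\exp(\inf_{s\geq0}(\beta\Psi(s\alpha)-rs))$ and its lower-tail analogue, and the remaining scalar infima are computed exactly as in the proof of Corollary~\ref{cor:RotatedCylBase}: the minimiser is $s=\tfrac1\alpha\log(1+\tfrac r{\alpha\beta})$ for the upper tail and, for $r<\alpha\beta$, $s=-\tfrac1\alpha\log(1-\tfrac r{\alpha\beta})$ for the lower tail, which produce the two displayed formulae.

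The one step that requires real work — and which I expect to be the main obstacle — is, as in Corollary~\ref{cor:RotatedCylBase}, to verify that the lower-tail estimate is valid throughout the stated range $0\le r\le\EE F_j$, i.e. that $\alpha\beta\ge\EE F_j$, so that $1-\tfrac r{\alpha\beta}>0$ and the infimum is genuinely attained on $\{s\ge0\}$ rather than approached at $s=\infty$. Here the crude bound $\EE F_j\le V_j(W)$ from monotonicity of the intrinsic volumes will in general not suffice, since it fails to capture the smallness of $\EE F_j$ when $\gamma$ is small, so I would instead estimate $\EE F_j$ via the explicit mean-value formula of Proposition~\ref{lm:IntrVolumesFraction} and compare it, term by term, with $\alpha\beta$ after retaining only the leading ($m=0$) summand $\beta_0=p/m_{d-k}$ in $\sum_{m}\beta_m\alpha^{m/j}$ together with suitably chosen terms of the sums defining $\alpha$ and $\beta$, closing the chain with the isoperimetric inequality for intrinsic volumes \cite[Equation (14.31)]{SW} and the monotonicity of $g(x)=\Gamma(1+x/2)^{1/x}$ from \cite[Theorem~1]{QiGuo}, in the manner of the final paragraph of the proof of Corollary~\ref{cor:RotatedCylBase}. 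Everything else — the binomial bookkeeping and the one-dimensional optimisation — is routine.
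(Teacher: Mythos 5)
Your reconstruction matches the paper's own proof, which is the single sentence ``The proof is analogous to the proof of Corollary~\ref{cor:DilatedRotatedCylBase} and Corollary~\ref{cor:RotatedCylBase} due to invariance of intrinsic volumes under rotations.'' The three steps you spell out are exactly what that sentence must unpack to: use $V_i(\Xi)=V_i(UM)=V_i(M)$ to make the argument of $\Psi$ and the outer $m$-sum deterministic and equal to $\alpha$ and $\sum_m\beta_m\alpha^{m/j}$ respectively; evaluate $\EE\big[V_{d-k}(P_{d-k}(\Theta^TW)+\Xi^*)\big]$ by the rotational integral formula and the mean projection formula as in Corollary~\ref{cor:DilatedRotatedCylBase} (with $R\equiv1$); and run the scalar optimisation $\inf_{s\geq0}\big(\beta\Psi(\pm\alpha s)-rs\big)$ exactly as in Corollary~\ref{cor:RotatedCylBase}. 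You have also correctly identified the one non-mechanical point that ``analogous'' implicitly covers, namely that the explicit lower-tail formula requires $r\leq\alpha\beta$ and hence $\alpha\beta\geq\EE F_j$ for the stated range $0\leq r\leq\EE F_j$ to be legitimate, mirroring the closing paragraph of the proof of Corollary~\ref{cor:RotatedCylBase}. That step you only sketch, and it is genuinely harder here than in the volume case $j=d$: the mean-value formula of Proposition~\ref{lm:IntrVolumesFraction} now contributes an alternating sum over several terms $V_{m+j}(W)$, so the chain ``retain $\beta_0$, pick suitable terms of $\alpha$ and $\beta$, apply the isoperimetric inequality and $\Gamma(1+x/2)^{1/x}$-monotonicity once'' does not automatically close; the sign structure and the multiple window intrinsic volumes need to be controlled before such a comparison goes through. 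In fairness, the paper itself does not carry out this verification either, but a complete proof of the corollary would need to do so.
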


\begin{proof}
The proof is analogous to the proof of Corollary \ref{cor:DilatedRotatedCylBase} and Corollary \ref{cor:RotatedCylBase} due to invariance of intrinsic volumes under rotations.
\end{proof}

\begin{remark}\rm 
As in Section \ref{subsec:Discussion} we consider the special case when the window is of the form $r^{1/d}W$ for some fixed convex body $W\subset\RR^d$. For this, we fix  $j\in\{k,\ldots,d-1\}$ and use that $\log(1+x)$ behaves like $x$ for small values of $x$. Then one can easily check that, as $r\to\infty$,
$$\PP(F_j-\EE F_j\geq r)\leq \exp\big({-\boldsymbol{\Theta}(r^{1-k/d})}\big),$$ 
which is independent of $j$. This should be compared to the bound \eqref{eq:24-06-19} for the volume in this situation.
\end{remark}

\subsection*{Acknowledgement}

We would like to thank the other members of our team for stimulating discussions about the topic of this paper during our regular research seminars in the summer term 2019. We also thank Claudia Redenbach (Kaiserslautern) for providing the two simulations shown in Figure \ref{fig} and G\"unter Last (Karlsruhe) for encouraging us to study the case of expanding windows.\\
A.B.\ and A.G.\ were supported by the Deutsche Forschungsgemeinschaft (DFG) via RTG 2131 \textit{High- dimensional Phenomena in Probability -- Fluctuations and Discontinuity}. C.T.\ was supported by the DFG Scientific Network \textit{Cumulants, Concentration and Superconcentration}.

\addcontentsline{toc}{section}{References}


\begin{thebibliography}{30}\small

\bibitem{BormanTykesson}
Borman, E.I. and Tykesson, J.: Connectedness of Poisson cylinders in Euclidean space. Ann. Inst. H. Poincar\'e Probab. Statist. \textbf{52}, 102--126 (2016).

\bibitem{GieringerDissertation}
Gieringer, F.: Konzentrationsungleichungen f\"ur Poisson- und Binomialfunktionale in der Stochastischen Geometrie. Dissertation KIT, Karlsruhe (2016).

\bibitem{GieringerLast}
Gieringer, F. and Last, G.: Concentration inequalities for measures of a Boolean model. ALEA, Lat. Am. J. Probab. Math. Stat. 15, 151--166 (2018).

\bibitem{HeinrichBM}
Heinrich, L.: Large deviations of the empirical volume fraction for stationary Poisson grain models. Ann. Appl. Probab. \textbf{15}, 392--420 (2005).

\bibitem{HeinrichSpiessCLTVolume}
Heinrich, L. and Spiess, M.: Berry-Esseen bounds and Cram\'er-type large deviations for the volume distribution of Poisson cylinder processes. Lithuanian Math. J. \textbf{49}, 381--398 (2009).

\bibitem{HeinrichSpiessCLTVundS}
Heinrich, L. and Spiess, M.: Central limit theorems for volume and surface content of stationary Poisson cylinder processes in expanding domains. Adv. Appl. Probab. \textbf{45}, 312--331 (2013).

\bibitem{Hilario}
Hilario, M.R., Sidoravicius, V. and Teixeira, A.: Cylinders percolation in three dimensions. Probab. Theory Relat. Fields \textbf{163}, 613--642 (2015).

\bibitem{Hoffmann}
Hoffmann, L.M.: Mixed measures of convex cylinders and quermass densities of Boolean models. Acta Appl. Math. \textbf{105}, 141--156 (2009).

\bibitem{Houdre}
Houdr\'e, C.: Remarks on the deviation inequalities for functions of infinitely divisible random vectors. Ann. Probab. \textbf{30}, 1223--1237 (2002).

\bibitem{HugLastSchulte}
Hug, D., Last, G. and Schulte, M.: Second-order properties and central limit theorems for geometric functionals of Boolean models. Ann. Appl. Probab. \textbf{26}, 73--135 (2016).

\bibitem{LP}
Last, G. and Penrose, M.: {\em Lectures on the Poisson Process}. Cambridge University Press (2018).

\bibitem{Materon}
Matheron, G.: {\em Random Sets and Integral Geometry}. Wiley (1975).

\bibitem{Miles}
Miles, R.E.: A synopsis of Poisson flats in Euclidean spaces. In: {\em Stochastic Geometry. A Tribute to the Memory of Rollo Davidson}, edited by Harding, E.F. and Kendall D., Wiley (1974).

\bibitem{QiGuo}
Qi, F. and Guo, B.-N.: Complete monotonicities of functions involving the gamma and digamma functions. RGMIA Research Report Collection \textbf{7}, article 8 (2004).

\bibitem{Saulis}
Saulis, L. and Statulevi\v{c}ius, V.A.: {\em Limit Theorems for Large Deviations}. Kluwer Academic Publishers (1991).


\bibitem{SW}
Schneider, R. and Weil, W.: {\em Stochastic and Integral Geometry}. Springer (2008).

\bibitem{SpiessSpodarev}
Spiess, M. and Spodarev, E.: Anisotropic Poisson processes of cylinders. Methodol. Comput. Appl. Probab. \textbf{13}, 801--819 (2011).

\bibitem{TykessonWindisch}
Tykesson, J. and Windisch D.: Percolation in the vacant set of Poisson cylinders. Probab. Theory Related Fields \textbf{154}, 165--191 (2012).

\bibitem{Weil}
Weil, W.: Point processes of cylinders, particles and flats. Acta Appl. Math. \textbf{9}, 103--136 (1987).

\bibitem{Weil90}
Weil, W.: Iterations of translative integral formulae and non-isotropic Poisson processes of particles. Math. Z. \textbf{205}, 531--549 (1990).

\bibitem{Wu}
Wu, L.: A new modified logarithmic Sobolev inequality for Poisson point processes and several applications. Probab. Theory Relat. Fields \textbf{118}, 427--438 (2000).

\end{thebibliography}
\end{document}